\def\R{{\mathbb R}}
\def\eps{{\varepsilon}}
\def\p{{\bf p}}
\def\q{{\bf q}}
\newtheorem{thm}{Theorem}[section]
\newtheorem{lem}[thm]{Lemma}
\newtheorem{clm}[thm]{Claim}
\newtheorem{cor}[thm]{Corollary}
\newtheorem{prop}[thm]{Proposition}
\theoremstyle{definition}
\begin{document}

\title{Dense graphs have rigid parts%\thanks{
%The first author was partially supported by XXX.
%The second author was partially supported by XXX.
%}
}

\date{}
\author{
Orit E. Raz\thanks{%
Department of Mathematics, 
University of British Columbia, Vancouver, Canada.
{\sl oritraz@math.ubc.ca} }
\and 
J\'ozsef Solymosi\thanks{%
Department of Mathematics, 
University of British Columbia, Vancouver, Canada.
{\sl solymosi@math.ubc.ca} }
}

\maketitle

\begin{abstract}
%We consider the notion of graph rigidity in $\R^2$.
While the problem of determining whether an embedding of a graph $G$ in $\R^2$ is {\it infinitesimally rigid} is well understood, specifying whether a given embedding of $G$ is {\it rigid} or not is still a hard task that usually requires ad hoc arguments.
In this paper, we show that {\it every} embedding (not necessarily generic) of a dense enough graph (concretely, a graph with at least $C_0n^{3/2}\log n$ edges, for some absolute constant $C_0>0$), 
which satisfies some very mild general position requirements
(no three vertices of $G$ are embedded to a common line), 
must have a subframework of size at least three which is rigid.
For the proof we use a connection, established in Raz~\cite{Raz}, between the notion of graph rigidity and configurations of lines in $\R^3$. This connection allows us to  use properties of line configurations established in Guth and Katz~\cite{GK2}.
In fact, our proof requires an extended version of Guth and Katz result; the extension we need is proved by J\'anos Koll\'ar in an Appendix to our paper.

We do not know whether our assumption on the number of edges being $\Omega(n^{3/2}\log n)$ is tight, and we provide a construction that shows that requiring  $\Omega(n\log n)$ edges is necessary.
\end{abstract}

%\section{Introduction}

\section{Introduction}
Let $G=([n],E)$ be a graph on $n$ vertices and $m$ edges, and let $\p=(p_1,\ldots,p_n)$ be an embedding of the vertices of $G$ in $\R^2$.
A pair $(G,\p)$ of a graph and an embedding is called a {\it framework}.
A pair of frameworks $(G,\p)$ and $(G,\q)$ are {\it equivalent} if for every edge $\{i,j\}\in E(G)$ we have $\|p_i-p_j\|=\|q_i-q_j\|$, where $\|\cdot\|$ stands for the standard Euclidean norm in $\R^2$. Two frameworks are {\it congruent} if there is a rigid motion of $\R^2$ that maps $p_i$ to $q_i$  for every $i$; equivalently, if $\|p_i-p_j\|=\|q_i-q_j\|$ for every pair $i,j$ (not necessarily in $E(G)$).
We say a framework $(G,\p)$ is rigid if there exists a neighborhood $B$ of $\p$ (in $(\R^2)^n$), such that, for every equivalent framework $(G,\p')$, with $\p'\in B$, we have that the two frameworks are in fact congruent. 

It is known that if for a graph $G$ there exists a rigid framework $(G,\p)$, for some embedding $\p$, then in fact every generic embedding $\p$ gives a rigid framework of $G$ (see \cite{AR1}).
In this sense one can define the notion of rigidity of an abstract graph $G$ in $\R^2$, without specifying an embedding. That is, a graph $G$ is {\it rigid} in $\R^2$ if a generic embedding $\p$ of its vertices in $\R^2$ yields a rigid framework $(G,\p)$.
A graph $G$ is {\it minimally rigid} if it is rigid and removing any of its edges results in a non-rigid graph. Graphs that are minimally rigid in $\R^2$ have a simple combinatorial characterization, described by Laman~\cite{Lam70} (and in fact earlier by Hilda~\cite{Hil}. Namely, a graph $G$ with $n$  vertices is minimally rigid in $\R^2$ if and only if $G$ has exactly $2n-3$ edges and every subgraph  of $G$ with $k$ vertices has at most $2k-3$ edges. Every rigid graph has a minimally rigid subgraph.

To see that rigidity is indeed a generic notion one defines the stricter notion of {\it infinitesimal rigidity}.
Given a graph $G$ as above, consider the map $f_G:(\R^2)^{n}\to\R^m$, given by
$$
\p\mapsto (\|p_i-p_j\|)_{\{i,j\}\in E},
$$
for some arbitrary (but fixed) ordering of the edges of $G$. 
Let $M_G$ be the Jacobian matrix of $f_G$ (which is an $m\times 2n$ matrix).
A framework $(G,\p)$ is called {\it infinitesimally rigid} if the rank of $M_G$ at $\p$ is exactly $2n-3$. It is not hard to see that the rank of $M_G$ is always at most $2n-3$. Combining this with the fact that a generic embedding $\p$ achieves the maximal rank of $M_G$, one concludes that being infinitesimally rigid is a generic property.  As it turns out (and not hard to prove), infinitesimal rigidity of $(G,\p)$ implies rigidity of $(G,\p)$, and therefore it follows that rigidity is a generic notion too. Moreover, for rigid graphs $G$, it is straightforward to describe a (measure zero) subset $X$ of $\R^{2n}$ where the rank of $M_G$
is strictly smaller than $2n-3$, and thus for such embeddings $\p$ the framework $(G,\p)$ is not infinitesimally rigid. However, $(G,\p)$ might be rigid or not.

%Laman's condition allows us to determine whether a given graph $G$ is rigid in $\R^2$. Thus, it tells us something regarding {\it generic} embeddings $\p$ of the vertices of $G$.
Finding a concrete description of all embeddings $\p$ of the vertices of $G$ that yield a rigid framework is a hard task. These are known only in very concrete and ``rare'' cases.
%, such as for complete bipartite graphs (see Bolker and Roth~\cite{BR80}).\orit{is it known here?!}

\paragraph{Our results.}
In this paper, we show that {\it every} embedding (not necessarily generic) of a dense enough graph,  
which satisfies some very mild general position requirements, 
must have a subframework of size at least three which is rigid.
Concretely, we prove the following theorem.
\begin{thm}\label{main}
There exists an absolute constant $C_0$ such that the following holds. Let $G$ be a graph on $n$ vertices and $C_0n^{3/2}\log n$ edges. Let $\p=(p_1,\ldots,p_n)$ be an (injective) embedding of the vertices of $G$ in $\R^2$ such that no three of the vertices are embedded to a common line.
Then there exists a subset $S\subset [n]$ of size at least three, such that the framework $(G[S],P_S)$, where $P_S:=\{p_i\mid i\in S\}$, is rigid.
\end{thm}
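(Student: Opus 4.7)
Following the strategy outlined in the abstract, I would (i) convert the problem about graph rigidity into a problem about line configurations in $\R^3$ via the correspondence from \cite{Raz}, (ii) apply a structural theorem for such line configurations due to Guth and Katz \cite{GK2}, in the extended form proved in Koll\'ar's appendix, and (iii) translate the resulting structure back to the graph side to extract a rigid subframework.

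First, I would invoke the construction of \cite{Raz} to associate to the framework $(G,\p)$ a collection $\mathcal L=\{\ell_e:e\in E(G)\}$ of $m:=C_0n^{3/2}\log n$ lines in $\R^3$, one for each edge. The hypothesis that no three vertices of $G$ are embedded to a common line in $\R^2$ should ensure that $\mathcal L$ is sufficiently non-degenerate (no spurious coincidences or repetitions among the $\ell_e$), so that (non-)rigidity of a subframework on a vertex set $S$ can be read off from the algebraic structure of the subcollection $\{\ell_e:e\subset S\}$.

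Next, I would produce many incidences among the lines of $\mathcal L$ by counting pairs of edges sharing a vertex. By Cauchy--Schwarz applied to the degree sequence,
\[
\sum_{v\in[n]}\binom{d_v}{2}\;\geq\;\frac{2m^2}{n}-m\;=\;\Omega(n^2\log^2 n),
\]
and under the Raz correspondence each such pair contributes an incidence (or a controlled coplanarity) between the two associated lines. I would then invoke the Guth--Katz theorem \cite{GK2}, in the strengthened form of the appendix, which roughly asserts that a collection of $L$ lines in $\R^3$ exhibiting substantially more than $L^{3/2}$ incidences---or more refined incidence data of the type produced by the rigidity correspondence---must contain a large subcollection lying on a common low-degree ruled surface (a plane, a regulus, or another doubly ruled surface of controlled degree). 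Setting $L=m$ and comparing with the $\Omega(n^2\log^2 n)$ lower bound from Cauchy--Schwarz should force the structured alternative, producing a subcollection $\mathcal L'\subset\mathcal L$ of size at least three lying on such a surface.

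Finally, I would translate this back: lines of $\mathcal L$ lying on one of the special surfaces correspond, via \cite{Raz}, to a vertex set $S\subset[n]$ with $|S|\geq 3$ such that $(G[S],P_S)$ is rigid. The main obstacle I anticipate is this last translation step. The class of surfaces that can arise in the rigidity correspondence of \cite{Raz} is broader than the planes and reguli treated in the original \cite{GK2} argument, which is precisely the reason Koll\'ar's extension is invoked; and one must carefully verify that each such surface does yield a \emph{genuinely} rigid subframework on $\geq 3$ vertices, ruling out degenerate outputs (for example, a collinear triple or a trivial two-vertex edge) by means of the no-three-collinear hypothesis.
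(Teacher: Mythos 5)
Your proposal has the right ingredients in name (the Raz correspondence, Guth--Katz, Koll\'ar's extension) but the mechanism is wrong in two essential ways. First, the correspondence used in the paper does not attach a line to each \emph{edge}: it fixes a second embedding $\p'$, taken from a small neighborhood of $\p$ and \emph{equivalent} to it, and attaches to each \emph{vertex} $i$ the line $\ell_i=\ell_{p_i,p_i'}$ of rotations carrying $p_i$ to $p_i'$. One then gets $n$ lines (not $m$), and each edge $\{i,j\}$ forces $\ell_i\cap\ell_j\neq\emptyset$, so the configuration already has $\geq C_0n^{3/2}\log n$ intersecting pairs among only $n$ lines --- this, not a Cauchy--Schwarz count of $\Omega(n^2\log^2 n)$ edge pairs sharing a vertex, is what collides with the $O(n^{3/2})$ bound of Theorem~\ref{gkcor}. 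Your version omits the perturbed embedding $\p'$ entirely, and with it the whole quantifier structure of rigidity (``for every equivalent $\p'$ in a neighborhood\dots''); without $\p'$ there is no line configuration to analyze, and it is not clear what line $\ell_e$ you would assign to an edge or why pairs of edges sharing a vertex would yield incidences.

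Second, your translation step runs in the opposite direction from the truth. In this correspondence, \emph{rigidity comes from concurrency}: if for every admissible $\p'$ the lines $\ell_i$, $i\in S$, pass through a common point, then $(G[S],P_S)$ is rigid (Corollary~\ref{rigidlines}). Lines lying on a common plane or regulus are the \emph{degenerate, flexible} configurations --- coplanarity corresponds to a reflection and is excluded by the no-three-collinear hypothesis via Lemma~\ref{cong}, and many lines on the two rulings of a regulus correspond to a flexible $K_{3,m}$ embedded on two lines, excluded via Lemma~\ref{lem:bipartite} and Corollary~\ref{cor:noruled}. So finding ``a large subcollection on a ruled surface'' would not produce a rigid subframework; it is precisely what the general-position hypotheses are used to rule out, so that Theorem~\ref{gkcor} applies and forces a very rich point (one incident to $>n/d$ of the lines). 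Finally, the actual argument is an induction on $n$: a dyadic decomposition of the edges by the richness of their intersection points shows at least half the edges meet at a $k$-rich point with $k>n/d$, and then either a dense subgraph appears (induction applies) or an edge count yields a contradiction; the recursion $C_n\geq 2B\bigl(C_{n^{1/2}}+C_{n/d}/\sqrt{d}\bigr)$ is what produces the $\log n$ factor. None of this inductive structure appears in your outline, and without it the quantitative bookkeeping that makes the threshold $C_0 n^{3/2}\log n$ work cannot be recovered.
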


We do not know whether the assumption that $G$ has $\Omega(n^{3/2}\log n)$ edges in Theorem~\ref{main} is necessary, and in fact we believe an analogue statement should hold for graphs with less edges. The following theorem yields a lower bound on the number of edges, namely, $\Omega(n\log n)$, needed for the conclusion in Theorem~\ref{main} to hold.
\begin{thm}\label{main2}
For every $d\ge 2$, there exists a graph $H_d$, with $n=2^d$ vertices and 
$\frac12n\log n$ edges, and an embedding $\p$ of $H_d$ in $\R^2$, such that
no three vertices of $H_d$ are embedded to a common line in $\R^2$
and every subframework of $(H_d,\p)$ of size at least three is non-rigid.
\end{thm}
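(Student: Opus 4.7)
The plan is to take $H_d$ to be the $d$-dimensional Boolean hypercube $Q_d$, whose vertex set is $\{0,1\}^d$ and whose edges connect vectors that differ in exactly one coordinate; this gives $n=2^d$ vertices and $d\cdot 2^{d-1}=\tfrac{1}{2}n\log n$ edges, matching the claim. I would embed the hypercube linearly: fix vectors $t_0,t_1,\ldots,t_d\in\R^2$ and set $\p(x):=t_0+\sum_{i=1}^d x_i t_i$, i.e.\ realize $H_d$ as (a translate of) the Minkowski sum of $d$ segments. A standard genericity argument shows that for a Zariski-generic choice of the $t_i$ the map $\p$ is injective and no three of its images are collinear.

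The construction is designed so that for every direction $i\in\{1,\ldots,d\}$ the framework admits a continuous one-parameter flex obtained by ``rotating the $i$-th generator''. Namely, let $R_\theta$ denote rotation of $\R^2$ by angle $\theta$ and define
$$
\p_\theta^{(i)}(x):=t_0+\sum_{j\ne i}x_j t_j+x_i R_\theta t_i,
$$
which fixes the half-cube $\{x_i=0\}$ pointwise and rigidly translates the half-cube $\{x_i=1\}$ by the vector $(R_\theta-I)t_i$. For any edge $(x,y)$ of $H_d$, the endpoints $x,y$ differ in exactly one coordinate $j$: if $j=i$ the edge vector becomes $R_\theta t_i$, which has the same length as $t_i$; if $j\ne i$ both endpoints receive the same translation, so the edge length is unchanged. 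Hence $\p_\theta^{(i)}$ is equivalent to $\p$ for every $\theta$, and so is its restriction to any subframework.

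To finish, given $S\subseteq\{0,1\}^d$ with $|S|\ge 3$, I would pick a coordinate $i$ in which the elements of $S$ do not all agree, so that both $S_0^{(i)}:=\{x\in S:x_i=0\}$ and $S_1^{(i)}:=\{x\in S:x_i=1\}$ are nonempty; since $|S|\ge 3$, one side, say $S_1^{(i)}$, has at least two elements. For any $x\in S_0^{(i)}$ at most one $y\in S_1^{(i)}$ can equal $x+e_i$, so there exist $x\in S_0^{(i)}$ and $y\in S_1^{(i)}$ differing in at least one coordinate besides $i$. Setting $u:=\sum_{j\ne i}(y_j-x_j)t_j\in\R^2$, one computes
$$
\p_\theta^{(i)}(y)-\p_\theta^{(i)}(x)=u+R_\theta t_i,
$$
whose squared length equals $\|u\|^2+\|t_i\|^2+2\,u\cdot R_\theta t_i$. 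Provided $u\ne 0$ and $u$ is not parallel to $t_i$, this is a non-constant function of $\theta$, so the distance between these two vertices of $S$ strictly changes near $\theta=0$. Therefore $\p_\theta^{(i)}|_S$ is equivalent but not congruent to $\p|_S$ for small $\theta\ne 0$, showing that $(H_d[S],\p_S)$ is non-rigid.

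The main technical hurdle is the genericity bookkeeping: a single tuple $(t_0,\ldots,t_d)$ must simultaneously yield an injective $\p$, no three collinear images, and, for every relevant triple $(i,x,y)$ appearing in the last paragraph, the nonvanishing of $u$ together with non-parallelism of $u$ to $t_i$. Each of these requirements is the nonvanishing of an explicit nontrivial polynomial in the coordinates of the $t_j$; there are only finitely many such conditions, so their common complement is a dense Zariski-open subset of $(\R^2)^{d+1}$ and any point in it furnishes the desired embedding.
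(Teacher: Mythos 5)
Your proposal is correct and follows essentially the same route as the paper: the same hypercube graph $H_d$, the same generic linear image of the standard $\{0,1\}^d$ embedding (with a finite union of proper subvarieties excluded to get injectivity, no three collinear points, and the nondegeneracy of the witness vectors $u$), and the same one-parameter flex that rotates a single coordinate generator, i.e.\ translates one half-cube relative to the other while preserving all edge lengths. The only difference is organizational: where the paper proves non-rigidity of every subframework by induction on $d$ (splitting into the case of a non-edge pair inside one copy of $H_{d-1}$ versus one across the two copies), you argue directly by picking a coordinate $i$ on which $S$ is split and a pair $x,y$ on opposite sides differing in some further coordinate --- a clean, induction-free packaging of the same mechanism.
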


The paper is organized as follows. In Section~\ref{sec:pre}, we review a connection established in \cite{Raz} between rigidity questions and certain line configurations in $\R^3$. In Section~\ref{sec:bipartite}, we establish some properties regarding embeddings of complete bipartite graphs in $\R^2$.
In Section~\ref{sec:GK}, we review results from \cite{GK2} regarding point-line incidences in $\R^3$. In Section~\ref{sec:proofmain}, we give the proof of Theorem~\ref{main}.
In Section~\ref{sec:hypercube}, we provide a construction that proves Theorem~\ref{main2}.

\section{Rigidity in the plane and line configurations in $\R^3$}\label{sec:pre}
In this section we review some known facts that we need for our analysis. We review a reduction, introduced first in Raz~\cite{Raz}, to connect the notion of graph rigidity of planar structures with line configurations in $\R^3$.
The reduction uses the so called Elekes--Sharir framework, see~\cite{ES,GK2}.
Specifically, we represent each orientation-preserving rigid motion of the plane 
(called a \emph{rotation} in \cite{ES,GK2}) as a point
$(c,\cot (\theta/2))$ in $\R^3$, where $c$ is the center of rotation, 
and $\theta$ is the (counterclockwise) angle of rotation. (Note that pure translations are mapped
in this manner to points at infinity.) Given a pair of distinct points $a,b\in\R^2$,
the locus of all rotations that map $a$ to $b$ is a line $\ell_{a,b}$ in the above
parametric 3-space, given by the parametric equation 
\begin{equation} \label{line}
\ell_{a,b} = \{ \left( u_{a,b} + tv_{a,b},\;t\right) \mid t\in\R \} ,
\end{equation}
where $u_{a,b} = \tfrac12(a+b)$ is the midpoint of $ab$, and $v_{a,b}=\tfrac12(a-b)^{\perp}$ is a vector
orthogonal to $\vec{ab}$ of length $\tfrac12\|a-b\|$, with $\vec{ab}$, $v_{a,b}$ positively oriented
(i.e., $v_{a,b}$ is obtained by turning $\vec{ab}$ counterclockwise by $\pi/2$). 

Note that every non-horizontal line $\ell$ in $\R^3$ can be written 
as $\ell_{a,b}$, for a unique (ordered) pair $a,b\in\R^2$. 
More precisely, if $\ell$ is also non-vertical, the resulting $a$ and $b$ 
are distinct. If $\ell$ is vertical, then $a$ and $b$ coincide, at the intersection of $\ell$ with the $xy$-plane, 
and $\ell$ represents all rotations of the plane about this point.

A simple yet crucial property of this transformation is that, for any pair of pairs $(a,b)$
and $(c,d)$ of points in the plane, $\|a-c\|=\|b-d\|$ if and only if $\ell_{a,b}$ and $\ell_{c,d}$
intersect, at (the point representing) the unique rotation $\tau$ that maps $a$ to $b$
and $c$ to $d$. This also includes the special case where $\ell_{a,b}$ and $\ell_{c,d}$ are
parallel, corresponding to the situation where the transformation that maps $a$ to $b$
and $c$ to $d$ is a pure translation (this is the case when $\vec{ac}$ and $\vec{bd}$ are parallel and of equal length).

Note that no pair of lines $\ell_{a,b}$, $\ell_{a,c}$ with $b\neq c$ can intersect (or be parallel), 
because such an intersection would represent a rotation that maps $a$ both to $b$ and to $c$,
which is impossible.

\begin{lem}[{\bf Raz \cite[Lemma 6.1]{Raz}}] \label{cong}
Let $L = \{\ell_{a_i,b_i} \mid a_i, b_i \in \R^2,\; i=1,\ldots,r\}$ be a collection of $r\ge 3$ (non-horizontal) lines in $\R^3$.

\noindent
(a) If all the lines of $L$ are concurrent, at some common point $\tau$,
then the sequences $A=(a_1,\ldots,a_r)$ and $B=(b_1,\ldots,b_r)$ are congruent, 
with equal orientations, and $\tau$ (corresponds to a rotation that) maps $a_i$ to $b_i$, for each $i=1,\ldots,r$.

\noindent
(b) If all the lines of $L$ are coplanar, within some common plane $h$,
then the sequences $A=(a_1,\ldots,a_r)$ and $B=(b_1,\ldots,b_r)$ are congruent, 
with opposite orientations, and $h$ defines, in a unique manner, an orientation-reversing 
rigid motion $h^*$ that maps $a_i$ to $b_i$, for each $i=1,\ldots,r$.

\noindent
(c) If all the lines of $L$ are both concurrent and coplanar, then the points
of $A$ are collinear, the points of $B$ are collinear, and $A$ and $B$ are congruent.
\end{lem}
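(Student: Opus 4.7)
The plan is to work entirely within the Elekes--Sharir dictionary set up above. Two standing facts carry the argument: (i) a point $\tau\in\R^3$ lies on $\ell_{a,b}$ if and only if the orientation-preserving rigid motion represented by $\tau$ sends $a$ to $b$, and (ii) $\ell_{a,b}$ meets $\ell_{c,d}$ (or is parallel to it) if and only if $\|a-c\|=\|b-d\|$.

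Part (a) is immediate: the common point $\tau$ of the lines $\ell_{a_i,b_i}$ encodes a single orientation-preserving rigid motion (a rotation if $\tau$ is finite, a translation if $\tau$ lies at infinity) sending $a_i$ to $b_i$ for every $i$, which simultaneously witnesses both the congruence and the equal orientations.

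For part (b), any two lines $\ell_{a_i,b_i}$, $\ell_{a_j,b_j}$ lying in the common plane $h$ must either meet or be parallel, so (ii) yields $\|a_i-a_j\|=\|b_i-b_j\|$ and hence the congruence of $A$ and $B$. The substance of the claim is producing the orientation-reversing isometry $h^*$ canonically attached to $h$. I would write the equation of $h$ as $\langle n,z\rangle+\gamma t=\delta$ with $n\in\R^2$ nonzero (a horizontal plane cannot contain a non-horizontal line), substitute the parameterization $z=u_{a,b}+tv_{a,b}$, and equate the $t^{0}$ and $t^{1}$ coefficients, obtaining $\langle n,a+b\rangle=2\delta$ together with $\langle n^{\perp},b-a\rangle=2\gamma$. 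Solving these linear equations for $b$ in terms of $a$ gives $b=R(a)+w$, where $R$ is the reflection in the line through the origin orthogonal to $n$ and $w\in\R^2$ is an explicit translation depending on $n,\gamma,\delta$; this is a glide reflection (or a pure reflection), hence orientation-reversing. Setting $h^*(a):=b$ gives the uniquely determined map, with $h^*(a_i)=b_i$ for every $i$, so $A$ and $B$ are congruent with opposite orientations.

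Part (c) combines the two preceding cases. Applying (a) and (b) simultaneously gives an orientation-preserving isometry $\tau$ and an orientation-reversing isometry $h^*$ with $\tau(a_i)=h^*(a_i)=b_i$ for every $i$. The composition $(h^*)^{-1}\circ\tau$ is then an orientation-reversing isometry of $\R^2$ having at least one fixed point (each $a_i$); its fixed-point set is therefore a line, so the $a_i$ are collinear, and since $\tau$ is an isometry the $b_i$ are collinear too. The congruence of $A$ and $B$ has already been recorded in (a). The main obstacle is the explicit computation in (b) linking a plane of the parametric $\R^3$ to an orientation-reversing rigid motion of $\R^2$; this is the step that binds the two sides of the dictionary together, and it is the only place that requires a direct calculation rather than an appeal to the incidence properties (i) and (ii).
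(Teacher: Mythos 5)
The paper does not prove this lemma at all: it is imported verbatim as \cite[Lemma 6.1]{Raz}, so there is no in-paper argument to compare against. Judged on its own, your proof is correct and is the standard one for this dictionary. Part (a) is indeed immediate from the defining property of $\ell_{a,b}$; in part (b) the congruence follows from the meet-or-parallel criterion applied to each pair, and your coefficient computation correctly shows that a non-horizontal plane $\langle n,z\rangle+\gamma t=\delta$ (with $n\neq 0$) forces $b$ to be an affine function of $a$ whose linear part negates the $n$-component and fixes the $n^{\perp}$-component, i.e.\ a reflection composed with a translation, hence an orientation-reversing isometry $h^*$ with $h^*(a_i)=b_i$ (the sign in $\langle n^{\perp},b-a\rangle=\pm 2\gamma$ depends on the convention for $\perp$ but does not affect the conclusion); and in part (c) the observation that $(h^*)^{-1}\circ\tau$ is an orientation-reversing planar isometry with a fixed point, hence a reflection whose fixed locus is a line, cleanly yields collinearity of $A$ and therefore of $B=\tau(A)$. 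The only cosmetic slip is the parenthetical ``translation if $\tau$ lies at infinity'' in (a): the hypothesis places $\tau$ in $\R^3$, so that case does not arise.
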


The following corollary is now straightforward.
\begin{cor}\label{rigidlines}
Let $G$ be a graph, over $n$ vertices, and let $\p$ be an embedding of $G$ in the plane. Assume that there exists an open neighborhood $B$ of $\p$ (in $(\R^2)^n$) such that for every equivalent framework $(G,\p')$, with $\p'\in B$, the lines $\ell_i:=\ell_{p_i,p_i'}$, for $i=1,\ldots,n$, are concurrent. Then the framework $(G,\p)$ is rigid.
\end{cor}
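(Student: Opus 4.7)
The plan is to deduce the corollary as an immediate consequence of Lemma~\ref{cong}(a). By the definition of rigidity, it suffices to fix an arbitrary framework $(G,\p')$ equivalent to $(G,\p)$ with $\p'\in B$ and produce an orientation-preserving (or, more generally, some) rigid motion of $\R^2$ sending $p_i$ to $p_i'$ for every $i$.

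To do so, I would invoke the hypothesis directly. Each line $\ell_i := \ell_{p_i,p_i'}\subset\R^3$ is non-horizontal by its very definition in \eqref{line} (its direction vector has last coordinate $1$), so the family $L=\{\ell_1,\ldots,\ell_n\}$ fits the hypotheses of Lemma~\ref{cong}. Assuming $n\ge 3$, the concurrency assumption places us in case~(a) of that lemma, so the common intersection point $\tau$ encodes an orientation-preserving rigid motion of $\R^2$ mapping $p_i$ to $p_i'$ for every $i$. Hence $\p$ and $\p'$ are congruent, and since the neighborhood $B$ is prescribed by the hypothesis, the rigidity of $(G,\p)$ follows from the definition.

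I do not foresee any real obstacle: once the Elekes--Sharir lifting of Section~\ref{sec:pre} is in place, the corollary is essentially a direct translation of Lemma~\ref{cong}(a) into the rigidity vocabulary. The only mild technicalities worth flagging are (i) checking that each $\ell_i$ is legitimately non-horizontal (immediate, even when $p_i=p_i'$, in which case $\ell_i$ is the vertical line through $p_i$), and (ii) handling separately the degenerate small cases $n\in\{1,2\}$, for which the statement is trivial (a single vertex is always congruent to itself, and for two vertices the single relevant distance is preserved by equivalence as soon as $\{1,2\}\in E$).
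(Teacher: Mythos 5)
Your proposal is correct and matches the paper's own (one-line) proof, which likewise derives the corollary directly from Lemma~\ref{cong}(a) and the definition of rigidity; your extra remarks on non-horizontality and the degenerate cases $n\le 2$ are harmless elaborations of the same argument.
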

\begin{proof}
This follows from Lemma~\ref{cong}(a) and the definition of rigidity of a framework.
\end{proof}

\section{Embeddings of complete bipartite graphs in $\R^2$}\label{sec:bipartite}
We first recall a lemma and some notation introduced in Raz~\cite{Raz-dist2}. For completeness, we give all the details here.
For $\p=(p_1,\ldots,p_{d+1}), \p'=(p'_1,\ldots,p_{d+1}')\in(\R^d)^{d+1}$, we define
$$
\Sigma_{\p,\p'}:=\{(q,q')\in\R^d\times\R^d \mid \|p_i-q\|=\|p_i'-q'\|~~i=1,\ldots,d+1\} ,
$$
and let $\sigma_{\p,\p'}$ (resp., $\sigma'_{\p,\p'}$) denote the projection of $\Sigma_{\p,\p'}$
onto the first $d$ (resp., last $d$) coordinates of $\R^d\times\R^d$.

We have the following lemma.
%%%%%%%%%%%%%%%%%%%%%%%%%%%%%%%%
\begin{lem}\label{bipsurf}
Let $\p,\p'$ be in general position. Then $\sigma_{\p,\p'}$ is a quadric surface, and there exists an invertible affine transformation 
$T:\R^d\to\R^d$, such that $T(\sigma_{\p,\p'})=\sigma'_{\p,\p'}$
and $(q,q')\in\Sigma_{\p,\p'}$ if and only if $q\in\sigma_{\p,\p'}$ and $q'=T(q)$.
\end{lem}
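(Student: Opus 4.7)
The plan is to turn the $d+1$ defining equations $\|p_i-q\|^2 = \|p_i'-q'\|^2$ into a linear system of $d$ equations plus a single quadratic equation by a standard ``subtract one equation from the rest'' trick.

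First, I would expand each equation as
\[
\|q\|^2 - 2p_i\cdot q + \|p_i\|^2 = \|q'\|^2 - 2p_i'\cdot q' + \|p_i'\|^2, \qquad i=1,\ldots,d+1,
\]
and subtract the equation for $i=1$ from the equations for $i=2,\ldots,d+1$. The quadratic terms $\|q\|^2$ and $\|q'\|^2$ cancel, leaving $d$ linear equations
\[
2(p_i-p_1)\cdot q - 2(p_i'-p_1')\cdot q' = c_i, \qquad i=2,\ldots,d+1,
\]
where $c_i := (\|p_i\|^2 - \|p_1\|^2) - (\|p_i'\|^2 - \|p_1'\|^2)$. Written in matrix form this is $Aq - A'q' = b$, with $A,A'\in\R^{d\times d}$ the matrices whose $(i-1)$-st rows are $2(p_i-p_1)^T$ and $2(p_i'-p_1')^T$, respectively.

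Next, I would invoke the general position hypothesis: since $p_1,\ldots,p_{d+1}$ are affinely independent, the vectors $\{p_i-p_1\}_{i\ge 2}$ form a basis of $\R^d$, so $A$ is invertible, and likewise $A'$. Hence one may solve
\[
q' = T(q) := (A')^{-1}Aq - (A')^{-1}b,
\]
which is an invertible affine map $\R^d\to\R^d$. Substituting $q'=T(q)$ into the one remaining (unsubtracted) equation for $i=1$ produces a single equation in $q$,
\[
\|q\|^2 - \|T(q)\|^2 - 2p_1\cdot q + 2p_1'\cdot T(q) + \|p_1\|^2 - \|p_1'\|^2 = 0,
\]
whose left-hand side is a polynomial of degree at most two in $q$. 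This polynomial cuts out $\sigma_{\p,\p'}$, the projection of $\Sigma_{\p,\p'}$ onto the first $d$ coordinates, so $\sigma_{\p,\p'}$ is a quadric. By construction, $(q,q')\in \Sigma_{\p,\p'}$ iff $q\in\sigma_{\p,\p'}$ and $q'=T(q)$; applying $T$ to $\sigma_{\p,\p'}$ then gives $\sigma'_{\p,\p'}$, completing the proof.

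The main obstacle I anticipate is verifying that the resulting equation genuinely has degree two rather than collapsing to something of lower degree (which would happen precisely when $(A')^{-1}A$ is orthogonal, i.e.\ when $\p'$ is a rigid motion of $\p$). In that degenerate case $\sigma_{\p,\p'}$ degenerates to an affine subspace, and one needs either to broaden the meaning of ``quadric surface'' to allow such degenerations or to strengthen the general-position assumption so that $\p$ and $\p'$ are not congruent; either interpretation is consistent with how the statement is used downstream, but this point deserves a careful check against the conventions of \cite{Raz-dist2}.
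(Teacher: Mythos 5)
Your proof is correct and follows essentially the same route as the paper: subtract one of the $d+1$ equations from the rest to cancel the quadratic terms, use general position to invert the resulting linear system and define the affine map $T$, then substitute $q'=T(q)$ back into the remaining equation to obtain the quadric cutting out $\sigma_{\p,\p'}$ (the paper subtracts the $(d+1)$st equation rather than the first, which is purely cosmetic). Your closing caveat about possible degeneration of the quadric is a fair observation, but the paper's own proof glosses over the same point, so it does not distinguish your argument from theirs.
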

%%%%%%%%%%%%%%%%%%%%%%%%%%%%%%%%
\begin{proof}
By definition, for $(q,q')\in \Sigma_{\p,\p'}$ we have
\begin{align*}
\|p_i-q\|^2=&\|p_i'-q'\|^2,~~~i=1,\ldots,d+1,
\end{align*}
or
$$
\|p_i\|^2-2p_i\cdot q+\|q\|^2=\|p_i'\|^2-2p_i'\cdot q'+\|q'\|^2,~~~i=1,\ldots,d+1.
$$
Subtracting  the $(d+1)$th equation from each of the other equations, we get the system
\begin{align*}
\|p_i\|^2-\|p_{d+1}\|^2-2(p_i-p_{d+1})\cdot q&= \|p'_i\|^2-\|p'_{d+1}\|^2-2(p_i'-p_{d+1}')\cdot q',~~~i=1,\ldots,d\\
\|p_{d+1}\|^2-2p_{d+1}\cdot q+\|q\|^2&=\|p_{d+1}'\|^2-2p_{d+1}'\cdot q'+\|q'\|^2.
\end{align*}
The system can be rewritten as
\begin{align*}
\tfrac12u-A q&=\tfrac12 v-B q', \\
\|p_{d+1}\|^2-2p_{d+1}\cdot q+\|q\|^2&=\|p_{d+1}'\|^2-2p_{d+1}'\cdot q'+\|q'\|^2 ,
\end{align*}
where $A$ (resp., $B$) is a $d\times d$ matrix whose $i$th row equals $p_i-p_{d+1}$ 
(resp., $p'_i-p'_{d+1}$), and 
\begin{align*}
u & = \left( \|p_1\|^2-\|p_{d+1}\|^2 , \|p_2\|^2-\|p_{d+1}\|^2 ,\ldots, \|p_d\|^2-\|p_{d+1}\|^2 \right) \\
v & = \left( \|p'_1\|^2-\|p'_{d+1}\|^2 , \|p'_2\|^2-\|p'_{d+1}\|^2 ,\ldots, \|p'_d\|^2-\|p'_{d+1}\|^2 \right) 
\end{align*}
are vectors in $\R^d$. Our assumption that each of $\p,\p'$ is in general position 
implies that each of $A,B$ is invertible. Hence we have 
$$
q'=B^{-1}Aq+w,
$$
for $w = \frac12 B^{-1}(v-u) \in\R^d$. Let $T(q):=B^{-1}Aq+w$. 
So $(q,q')\in \Sigma_{\p,\p'}$ if and only if $q'=T(q)$ and 
\begin{equation}\label{projq}
\|p_{d+1}\|^2-2p_{d+1}\cdot q+\|q\|^2=\|p_{d+1}'\|^2-2p_{d+1}'\cdot T(q)+\|T(q)\|^2,
\end{equation}
where the latter constraint comes from considering the $(d+1)$st equation, using $q'=T(q)$.
We conclude that $\sigma_{\p,\p'}$ is the quadric given by  \eqref{projq}. Moreover,  $(q,q')\in \Sigma_{\p,\p'}$ if and only if  $q\in \sigma_{\p,\p'}$  and $q'=T(q)$. 
Hence, $T$ maps $\sigma_{\p,\p'}$ into $\sigma'_{\p,\p'}$. 
This completes the proof.
\end{proof}

We now apply Lemma~\ref{bipsurf} to describe the non-rigid frameworks of $K_{3,m}$ embedded in $\R^2$.
\begin{lem}\label{lem:bipartite}
Let $K_{3,m}$ denote the $3\times m$ complete bipartite graph and let $\p:[3]\to\R^2$ and $\q:[m]\to \R^2$ be an embedding of the vertices of $K_{3,m}$ in the plane.
Suppose $m\ge 5$. Then the framework $(K_{3,m},\p\cup\q)$ is rigid, unless $\p\cup \q$ embeds the vertices of the graph to a pair of two lines in $\R^2$. 
\end{lem}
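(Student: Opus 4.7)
The plan is to argue by contrapositive: assume that $(K_{3,m}, \p \cup \q)$ is non-rigid, and deduce that $\p \cup \q$ lies on a pair of two lines. The main tool is Lemma~\ref{bipsurf}.

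First I consider the case where $\p$ is non-collinear. A non-trivial continuous deformation $(\p(t), \q(t))$ of $(\p, \q) = (\p(0), \q(0))$, equivalent but not all congruent, yields (via Lemma~\ref{bipsurf} applied to $\p$ and $\p' = \p(t)$, for each small $t$) an invertible affine map $T_t : \R^2 \to \R^2$ satisfying $T_t(p_i) = p_i(t)$ for every $i$ and $T_t(q_j) = q_j(t)$ for every $j$. The equivalence of frameworks then says $T_t$ preserves each of the $3m$ edge distances $\|p_i - q_j\|$. Writing $T_t(x) = M_t x + c_t$, this translates to $(p_i - q_j)^\top K_t (p_i - q_j) = 0$ for all $i,j$, where $K_t := M_t^\top M_t - I$. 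Non-triviality of the deformation forces $T_t$ to differ from a rigid motion (else all $(\p(t), \q(t))$ would be congruent to $(\p, \q)$), so $K_t \neq 0$ for $t$ near $0$, and the null cone of the nonzero symmetric form $K_t$ is contained in a union $L_1 \cup L_2$ of at most two lines through the origin.

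Consequently each of the $3m$ vectors $\{p_i - q_j\}$ lies in $L_1 \cup L_2$, which means each $q_j$ must lie in the ``double line'' $(p_i + L_1) \cup (p_i + L_2)$ for every $i = 1, 2, 3$. The intersection of these three double lines is generically at most four points, so fitting $m \geq 5$ distinct $q_j$'s requires some coincidence: one of the lines $p_i + L_k$ must agree with the corresponding line $p_{i'} + L_k$ through another $p_{i'}$, which in turn requires $p_i - p_{i'} \in L_k$. A short case analysis then shows that any such coincidence, together with the non-collinearity of $\p$ and the presence of $m \geq 5$ distinct points $q_j$ in the intersection, forces $\p$ itself to be collinear---a contradiction. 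Therefore $\p$ must be collinear, on some line $\ell_p$.

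For the $\q$ side I run the symmetric argument. If the $q_j$'s have three non-collinear members, I choose such a triple that includes a pair of $q$'s whose distance is not preserved by the deformation (such a pair must exist: otherwise all pairwise $\q$-distances are preserved, which, combined with the fact that a point is determined by distances to two non-collinear anchors up to reflection, forces the full deformation to be trivial modulo rigid motion). Applying Lemma~\ref{bipsurf} with this triple of $q$'s as the ``triple'' side and $\{p_1, p_2, p_3\}$ as the ``other'' side produces an affine map $\tilde T_t$ with $\tilde K_t := \tilde M_t^\top \tilde M_t - I \neq 0$. The analogous null cone analysis on the nine vectors $\{q_k - p_i\}$, combined with the fact that $\p$ is now collinear (so a two-line null cone can only give the direction of $\ell_p$ together with parallel lines through distinct $p_i$'s whose triple intersection is empty), forces these nine vectors into a single line through the origin, and hence forces the chosen three $q$'s to be collinear, contradicting the choice. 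Hence $\q$ is collinear on some line $\ell_q$, and $\p \cup \q \subset \ell_p \cup \ell_q$, as required.

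The main obstacle I anticipate is making the case analysis of intersecting the three ``double lines'' in the second paragraph fully precise---ruling out all configurations in which five or more distinct $q_j$'s could fit without forcing collinearity of $\p$---together with handling the subtlety in the third paragraph of selecting a non-collinear triple of $q$'s for which the induced $\tilde K_t$ is nonzero.
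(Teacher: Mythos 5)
Your proposal takes a different route from the paper's (which first invokes the Bolker--Roth characterization of infinitesimally rigid bipartite frameworks to reduce to the case where all $3+m$ points lie on a conic section), but it has a genuine gap at its very first step. Lemma~\ref{bipsurf}, applied with $\p$ and $\p'=\p(t)$ as the anchor triples, gives you $q_j(t)=T_t(q_j)$ for each $j$, because each pair $(q_j,q_j(t))$ satisfies the three distance equations defining $\Sigma_{\p,\p(t)}$ --- those are edges of $K_{3,m}$. It does \emph{not} give you $T_t(p_i)=p_i(t)$. To extract that from the lemma you would need $(p_i,p_i(t))\in\Sigma_{\p,\p(t)}$, i.e.\ $\|p_k-p_i\|=\|p_k(t)-p_i(t)\|$ for $k=1,2,3$; and a direct computation with $T(q)=B^{-1}Aq+w$ confirms that $T(p_i)=p_i'$ is genuinely equivalent to such relations among the intra-$\p$ distances (for instance $T(p_3)=p_3'$ iff $\|p_i-p_3\|=\|p_i'-p_3'\|$ for $i=1,2$). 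But the pairs $\{p_i,p_k\}$ are \emph{not} edges of $K_{3,m}$, so equivalence of the frameworks gives no control over them --- assuming they are preserved is close to assuming the rigidity you are trying to prove. Without $T_t(p_i)=p_i(t)$ you cannot write $\|p_i-q_j\|=\|M_t(p_i-q_j)\|$, and the whole null-cone analysis of $K_t=M_t^{\top}M_t-I$ on the vectors $p_i-q_j$ collapses. The paper obtains $T(p_i)=p_i'$ only after a detour: Bolker--Roth reduces to the case where $\p\cup\q$ lies on a conic; the $m\ge 5$ points of $\q$ lie on both that conic and the quadric $\sigma_{\p,\p'}$, so the irreducible conic is a component of $\sigma_{\p,\p'}$, hence $p_i\in\sigma_{\p,\p'}$, and only then does $0=\|p_i-p_i\|=\|T(p_i)-p_i'\|$ force $T(p_i)=p_i'$.

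The same difficulty recurs, mirrored, in your third paragraph: with a triple of $q$'s as anchors, Lemma~\ref{bipsurf} yields $\tilde T_t(p_i)=p_i(t)$ but \emph{not} $\tilde T_t(q_{j_k})=q_{j_k}(t)$ for the anchors themselves; indeed you deliberately choose an anchor pair whose distance is not preserved, which is precisely the situation in which $\tilde T_t$ fails to carry the anchors to their deformed positions, so the null-cone argument on the vectors $q_{j_k}-p_i$ is again unavailable. Two smaller points: you assume a continuous flex $(\p(t),\q(t))$ exists, whereas non-rigidity as defined here only supplies equivalent non-congruent frameworks in every neighborhood (passing to a path needs an Asimow--Roth-type argument, though this can be sidestepped by arguing with a sequence $\p'\to\p$, as the paper does); and your paragraph-two conclusion that non-rigidity forces $\p$ itself to be collinear is stronger than the lemma's conclusion and would need to be reconciled with two-line configurations in which the part of size $3$ is split between the two lines.
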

\begin{proof}
By Bolker and Roth~\cite{BR80}, a framework $(K_{3,m},\p,\q)$ is infinitesimally rigid 
in $\R^2$ if and only if $\p\cup\q$ embeds the vertices of the graph to a conic section in $\R^2$. 
(In fact, we only need the property that if the embedding is not on a conic section, then the framework is rigid.)
Since infinitesimal rigidity implies rigidity, we only need to consider the case where the image of $\p\cup \q$ is a conic section. 

Assume first that the points $\p=(p_1,p_2,p_3)$ lie on a common line in $\R^2$. In this case, the conic section supporting $\p\cup \q$ is necessarily a pair of two lines. So in this case we are done. 

Assume next that $\p=(p_1,p_2,p_3)$ are not collinear, and that $\p\cup \q$ is irreducible. 
Let $B$ be a neighborhood of $\p\cup\q$ and let $(\p',\q')\in B$ be an embedding of the vertices of $K_{3,m}$ to this neighborhood. Taking $B$ sufficiently small, we may assume that also $\p'=(p_1',p_2',p_3')$ are not collinear. 

We apply Lemma~\ref{bipsurf} to the pair $(\p,\p')$.  
Then there exists an affine transformation $T:\R^2\to \R^2$, and a quadric surface $\sigma_{\p,\p'}$ such that each of $\q,\q'$ lies on a conic section in $\R^2$ (namely, the points of $\q$ lie on $\sigma_{\p,\p'}$ and the points of $\q'$ lie on $\sigma_{\p,\p'}'=T(\sigma_{\p,\p'})$, and we have $q_j'=T(q_j)$ for every $j=1,\ldots,m$.

Recall that $\p\cup \q$ also lies on a conic section. Since two distinct conic sections can share at most four points, and using $m\ge 5$, we conclude that $\sigma_{\p,\p'}$ and the conic section supporting $\p\cup\q$ have a common irreducible component. But $\p\cup \q$ is supported by an irreducible conic section, and therefore $\p\cup \q\subset \sigma_{\p,\p'}$.

By the properties of $\sigma_{\p,\p'}$ given by Lemma~\ref{bipsurf}, we must have $T(p_i)=p_i'$, for each $i=1,2,3$, since $0=\|p_i-p_i\|=\|T(p_i)-p_i'\|$. This implies that $\|p_i-p_j'\|=\|p_i'-p_j\|$, for every $i,j=1,2,3$. That is, $\p,\p'$ are congruent configurations, and $T(\p\cup \q)=\p'\cup\q'$.
We conclude that $T$ is a rigid motion of $\R^2$ and that $\p\cup\q$, $\p'\cup\q'$ are congruent.

We showed that for some neighborhood $B$ of $(\p,\q)$, and for every $(\p',\q')\in B$, if the frameworks $(K_{3,m},(\p,\q)$ and $(K_{3,m},(\p',\q')$ are equivalent, then they are also congruent. So in this case, the framework $(K_{3,m},(\p,\q)$ is rigd, by definition. This completes the proof of the lemma.
\end{proof}

\begin{cor}\label{cor:noruled}
Let $(\p,\q)$ be an embedding of some $3+m$ vertices in $\R^2$, with $m\ge 5$, $\p=(p_1,p_2,p_3)$, $\q=(q_1,\ldots,q_m)$.
Suppose that for every neighborhood $B$ of $(\p,\q)$ (in $(\R^2)^{3+m}$), there exists $(\p',\q')\in B$ such that the following holds: The lines 
$L_{\p,\p'}:=\{\ell_{p_i,pi'}\mid i=1,2,3\}$ and $L_{\q,\q'}:=\{\ell_{q_i,q_i'}\mid i=1,\ldots,m\}$ lie on a (common) doubly ruled surface $Q$ in $\R^3$. Assume further that the lines of $L_{\p,\p'}$ lie on one ruling of the surface $Q$ and the lines of $L_{\q,\q'}$ on the other ruling of $Q$.
Then the embedding $(\p,\q)$ is supported by a pair of lines in $\R^2$. 
\end{cor}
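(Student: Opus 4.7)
The plan is to establish that the framework $(K_{3,m},\p\cup\q)$ is \emph{not} rigid; since $m\ge 5$, the contrapositive of Lemma~\ref{lem:bipartite} then forces $\p\cup\q$ to be supported by a pair of lines in $\R^2$, as required.

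For the non-rigidity, the first step is to use the hypothesis to produce equivalent frameworks arbitrarily close to $(\p,\q)$. The defining feature of a doubly ruled surface is that any two of its lines belonging to distinct rulings meet (either in $\R^3$ or, in the limiting parallel case, at infinity). Therefore, for the $(\p',\q')$ supplied by the hypothesis, each line $\ell_{p_i,p_i'}$ intersects or is parallel to each line $\ell_{q_j,q_j'}$. By the Elekes--Sharir property recalled in Section~\ref{sec:pre}, this is equivalent to $\|p_i-q_j\|=\|p_i'-q_j'\|$ for every $i\in[3]$ and $j\in[m]$, and since these pairs are exactly the edges of $K_{3,m}$, the framework $(K_{3,m},(\p',\q'))$ is equivalent to $(K_{3,m},(\p,\q))$.

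The second step, which is the crux, is to rule out that all such equivalent frameworks are actually congruent to $(\p,\q)$. Suppose $(\p',\q')\in B$ is congruent via some rigid motion $\rho$. By shrinking $B$ we may assume $\rho$ is close to the identity and hence orientation-preserving, i.e., a rotation about some centre or a pure translation. By Lemma~\ref{cong}(a) (together with its limiting case for translations), the $3+m$ lines $\ell_{p_i,p_i'}$, $\ell_{q_j,q_j'}$ would then either be concurrent at the point of $\R^3$ representing $\rho$, or, if $\rho$ is a translation, pairwise parallel. In particular, the three lines of $L_{\p,\p'}$ would be concurrent or pairwise parallel. But these three lines are hypothesized to lie in a single ruling of the doubly ruled quadric $Q$, and on any non-degenerate doubly ruled surface (a hyperboloid of one sheet or a hyperbolic paraboloid) distinct lines of a single ruling are pairwise skew --- in particular, neither concurrent nor pairwise parallel. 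This contradiction shows that the $(\p',\q')$ supplied by the hypothesis is equivalent to but not congruent with $(\p,\q)$, proving that $(K_{3,m},\p\cup\q)$ is non-rigid.

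The main subtlety I anticipate is the precise meaning of ``doubly ruled surface'' in the hypothesis. I will adopt the Guth--Katz convention that excludes planes and other degenerate quadrics, which is exactly what makes the skew-lines property of a single ruling robust; under that convention the argument above is clean. If a more permissive notion were intended, Lemma~\ref{cong}(b) together with a direct case analysis of how $\p$ and $\q$ are distributed among the components of $Q$ would be needed to recover the ``pair of lines'' conclusion in the degenerate cases.
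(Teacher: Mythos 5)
Your proposal is correct and follows essentially the same route as the paper: deduce equivalence of $(K_{3,m},(\p,\q))$ and $(K_{3,m},(\p',\q'))$ from the fact that lines in opposite rulings of $Q$ must meet, observe that the two frameworks cannot be congruent, and conclude non-rigidity so that Lemma~\ref{lem:bipartite} applies. The only divergence is in the non-congruence step, where the paper simply notes that the lines $L_{\p,\p'}\cup L_{\q,\q'}$ are neither concurrent nor coplanar (covering both orientations of a putative congruence at once via Lemma~\ref{cong}), whereas you dispose of the orientation-reversing case by shrinking $B$ and of the orientation-preserving case via the pairwise skewness of lines in a single ruling --- both justifications are sound.
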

\begin{proof}
Let $(\p,\q)$ be an embedding of some $3+m$ vertices as in the statement. By assumption, for every neighborhood $B$ of $(\p,\q)$ there exists $(\p',\q')\in B$ and a doubly ruled surface $Q$, such that the lines of $L_{\p,\p'}$ lie on one ruling of $Q$, and the lines of $L_{\q,\q'}$ on the other ruling of $Q$. In particular, $\ell_{p_i,p_i'}\cap \ell_{q_j,q_j'}\neq \emptyset$, for every $i\in[3]$, $j\in[m]$.

By the definition of the lines $\ell_{p_i,p_i'}, \ell_{q_j,q_j'}$, this implies that 
$\|p_i-q_j\|=\|p_i'-p_j'\|$ for every $i\in[3]$, $j\in[m]$. In other words, regarding $(\p,\q)$ and $(\p',\q')$ as embeddings of the graph $K_{3,m}$, we see that the frameworks $(K_{3,m},(\p,\q))$ and $(K_{3,m},(\p',\q'))$ are equivalent. Note that these frameworks are not congruent, since the lines $L_{\p,\p'}\cup L_{\q,\q'}$ are neither concurrent nor coplanar. 

Since such an embedding $(\p',\q')$ exists in every neighborhood $B$ of $(\p,\q)$, we conclude that the framework $(K_{3,m},(\p,\q))$ is not rigid. By Lemma~\ref{lem:bipartite}, $(\p,\q)$ is supported by a pair of lines in $\R^2$. This completes the proof.
\end{proof}

\section{Point-line incidences in $\R^3$}\label{sec:GK}
We recall the following theorem of Guth and Katz~\cite{GK2}.
\begin{thm}[{\bf Guth and Katz~\cite[Theorem 2.10]{GK2}}]\label{gk2rich}
Let $L$ be a set of $n$ lines in $\R^3$, such that at most $\sqrt n$ lines lie in any plane or any regulus. Then the number of $2$-rich points in $L$ is at most $O(n^{3/2})$.
\end{thm}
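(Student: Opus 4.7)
The plan is to follow the polynomial partitioning strategy of Guth and Katz and bound the $2$-rich points inside the open cells of the partition and on its zero set separately. First I would invoke polynomial partitioning to produce a polynomial $p$ of degree $D=\Theta(\sqrt n)$ whose zero set $Z(p)$ cuts $\R^3$ into $O(D^3)$ open cells $\Omega_i$, with the property that each line $\ell\in L$ not contained in $Z(p)$ enters at most $D+1$ of them. Writing $k_i$ for the number of lines of $L$ that cross $\Omega_i$, one obtains $\sum_i k_i\le n(D+1)$.

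Inside an open cell, every $2$-rich point is the intersection of two of the $k_i$ lines crossing that cell, so the cell contributes at most $\binom{k_i}{2}$ such points. When the lines are spread roughly evenly across the $\Theta(D^3)$ cells, each carries $\Theta(n/D^2)$ lines, giving a total of $O(D^3\cdot(n/D^2)^2)=O(n^2/D)=O(n^{3/2})$ for the choice $D=\Theta(\sqrt n)$. Uneven distributions are controlled by a standard induction: reapply the partition inside any overloaded cell, noting that the hypothesis ``at most $\sqrt n$ lines in any plane or regulus'' descends to subcollections.

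The heart of the proof is bounding the $2$-rich points on $Z(p)$ itself. Such a point is the intersection of two lines $\ell,\ell'\in L$; if either line is not contained in $Z(p)$, then by B\'ezout it meets $Z(p)$ in at most $D$ points, so these contribute $O(nD)=O(n^{3/2})$ intersections in total. The remaining case, where both $\ell$ and $\ell'$ lie in $Z(p)$, is precisely where the hypothesis is used. Decompose $Z(p)$ into irreducible components of degrees $d_1,\ldots,d_r$ with $\sum_j d_j\le D$. Planes and reguli among them carry at most $\sqrt n$ lines each by hypothesis, hence at most $O(n)$ intersection points, summing to $O(D\cdot n)=O(n^{3/2})$ over all such components. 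For each irreducible component of degree $d_j\ge 3$, the Cayley--Salmon theorem together with the flecnode polynomial construction forces the component to be ruled and bounds the number of pairs of intersecting lines on it by $O(d_j^2)$, and these contributions sum to $O(D^2)=O(n)$.

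The main obstacle is exactly this last step: controlling the intersection pattern of lines lying entirely on $Z(p)$ when the degree is comparable to $\sqrt n$. Without the Cayley--Salmon classification of surfaces harboring infinitely many lines, and without the flecnode polynomial of Guth and Katz producing a curve of degree $O(d^2)$ that captures the ruling singularities, an irreducible high-degree component could a priori harbor far more concurrent pairs than the target $O(n^{3/2})$ allows. The hypothesis isolates exactly the two exceptional families---planes and doubly ruled quadrics---whose flecnode polynomial vanishes identically and which must therefore be excluded from the structural bound and handled directly by the combinatorial assumption.
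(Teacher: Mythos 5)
This theorem is not proved in the paper: it is imported verbatim from Guth--Katz \cite[Theorem 2.10]{GK2}, and the only related argument the paper contains is Koll\'ar's appendix, which proves a variant by pure \emph{degree reduction} --- a single surface $S$ of degree $O(\sqrt{m})$ containing \emph{all} the lines, followed by a component-by-component count of internal and external intersections --- with no polynomial partitioning at all. Your proposal grafts a cellular decomposition onto the front of the argument, and that is where the genuine gap lies. The basic polynomial partitioning theorem equidistributes a finite set of \emph{points}; it gives you no per-cell bound on the number $k_i$ of lines crossing a cell. From $\sum_i k_i\le n(D+1)$ alone you cannot control $\sum_i\binom{k_i}{2}$: the lines could concentrate in few cells. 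Your proposed fix, ``reapply the partition inside any overloaded cell,'' does not close as an induction: with the inductive guess $P(n)=O(n^{3/2})$ you would need $\sum_i P(k_i)=O(\sum_i k_i^{3/2})$ to be $o(n^{3/2})$, but absent a bound like $k_i=O(n/D^2)$ (which requires Guth's later partitioning-for-varieties theorem, not the tool you invoked) one only gets $\sum_i k_i^{3/2}\le (\max_i k_i)^{1/2}\sum_i k_i$, which can be as large as $n^2$. The $k=2$ case is precisely the one for which the cellular method fails and for which Guth and Katz resort to the purely algebraic surface argument; the paper's own Theorem~\ref{gkthm45} is stated only for $k\ge 3$ for exactly this reason.

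Your treatment of the zero set is the right skeleton (B\'ezout for lines not in $Z(p)$, planes and reguli handled by the $\sqrt n$ hypothesis, Cayley--Salmon and the flecnode polynomial forcing the remaining components to be ruled), but the accounting for ruled components is off. An irreducible singly ruled component of degree $d_j$ can contain up to $n$ lines of $L$, not $O(d_j)$ of them, so the bound you want is not ``$O(d_j^2)$ pairs per component summing to $O(D^2)=O(n)$'' but rather: each non-exceptional line on such a component meets $O(d_j)$ other lines of the component, giving $O(n_j d_j)$ internal intersections, which sums to $O\bigl((\max_j n_j)\sum_j d_j\bigr)=O(nD)=O(n^{3/2})$, plus a separate treatment of the at most two exceptional lines and one cone point per component. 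If you drop the cellular stage entirely and run the degree-reduction argument on a surface containing all of $L$ --- as Koll\'ar does in the appendix, and as Guth--Katz do for this theorem --- the proof goes through with the corrected component count.
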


\begin{thm}[{\bf Guth and Katz~\cite[Theorem 4.5]{GK2}}]\label{gkthm45}
Let $L$ be a set of $n$ lines in $\R^3$, such that at most $\sqrt n$ lines lie in any plane. Let $k\ge 3$. Then the number of points in $\R^3$ incident to at least $k$ lines of $L$ is at most
$
O\left(n^{3/2}k^{-2}+nk^{-1}\right).
$
\end{thm}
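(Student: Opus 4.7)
This is the Guth--Katz bound on $k$-rich points determined by lines in $\R^3$ under a plane-saturation hypothesis, and I would prove it via the polynomial method developed in~\cite{GK2}. The plan is to apply polynomial partitioning in $\R^3$ at a degree $D$ to be balanced later, count $k$-rich points interior to the open cells by a cheap pair-counting argument, and then perform the algebraic-geometric analysis for the $k$-rich points that lie on the partitioning surface.

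\emph{Partitioning and interior contribution.} Using polynomial partitioning, produce $P\in\R[x,y,z]$ of degree $\le D$ whose zero set $Z(P)$ decomposes $\R^3\setminus Z(P)$ into $O(D^3)$ open cells such that any line either lies in $Z(P)$ or meets $Z(P)$ in at most $D$ points (Bezout), and therefore enters at most $D+1$ cells. Consequently, a typical cell is crossed by at most $m=O(n/D^2)$ lines of $L$. Inside such a cell, every $k$-rich point accounts for $\binom{k}{2}$ intersecting pairs of lines, so the number of $k$-rich points in a cell is at most $O(m^2/k^2)$. Summing over the $O(D^3)$ cells yields $O(D^3(n/D^2)^2/k^2)=O(n^2/(Dk^2))$, and choosing $D\asymp n^{1/2}$ produces the interior contribution $O(n^{3/2}/k^2)$, which is the first term of the claimed bound.

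\emph{On-surface contribution.} For a $k$-rich point $p\in Z(P)$, either at least $k/2$ of the lines of $L$ through $p$ cross $Z(P)$ transversely at $p$, or at least $k/2$ of them are contained in $Z(P)$. In the transverse case, each line not contained in $Z(P)$ meets $Z(P)$ in at most $D$ points, so the total number of transverse incidences on $Z(P)$ is at most $nD$; dividing by $k/2$ gives $O(nD/k)$ rich points, which (after a further refinement using the plane hypothesis to control concentrations of transverse crossings along individual lines) can be sharpened to $O(n/k)$, matching the second term. In the contained case, the relevant lines all lie in the one-dimensional scheme $Z(P)\cap Z(\mathrm{Fl}(P))$, where $\mathrm{Fl}(P)$ is the flecnode polynomial of degree $O(D)$. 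The irreducible components of $Z(P)$ that carry such lines are either planes (each containing at most $\sqrt n$ lines by hypothesis), reguli or doubly ruled quadrics (whose points lie on only two lines of the ruling each and hence are never $k$-rich for $k\ge 3$), or higher-degree surfaces, which by a Cayley--Salmon-type bound contain only $O(D^2)$ lines in total. All these contributions can be absorbed into $O(n^{3/2}/k^2+n/k)$.

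\emph{Main obstacle.} The interior bound is essentially automatic once polynomial partitioning is set up. The serious work lies in the on-surface analysis: one must invoke the flecnode polynomial to force the lines contained in $Z(P)$ onto a low-degree subscheme, classify the surfaces that can carry many lines, and exploit the $\sqrt n$ plane hypothesis to eliminate the only remaining source of planar clustering. The restriction $k\ge 3$ is exactly what lets one drop the regulus hypothesis present in Theorem~\ref{gk2rich}, since a doubly ruled quadric produces only $2$-rich points along its two rulings and hence contributes nothing here.
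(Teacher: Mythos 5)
First, note that the paper does not prove this statement at all: Theorem~\ref{gkthm45} is imported verbatim from Guth and Katz \cite[Theorem 4.5]{GK2} and used as a black box, so there is no in-paper argument to compare yours against. What you have written is a reconstruction of the Guth--Katz proof, and it has to be judged on its own terms.

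On those terms there is a genuine gap in the on-surface analysis. With $D\asymp n^{1/2}$, your transverse count gives $O(nD/k)=O(n^{3/2}/k)$ rich points on $Z(P)$, which for $3\le k\le n^{1/2}$ exceeds both terms of the target bound $O(n^{3/2}k^{-2}+nk^{-1})$ (for $k\asymp n^{1/4}$ it gives $n^{5/4}$ against a target of $O(n)$). The parenthetical ``further refinement using the plane hypothesis'' is therefore not a refinement but the entire difficulty, and the plane hypothesis alone does not supply it: controlling the transverse crossings is exactly where \cite{GK2} departs from the naive partitioning scheme, proceeding instead through a degree-reduction step (placing the lines that each contain many rich points in a surface of much smaller degree), an analysis of critical and flat points of that surface, and an induction on $n$. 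Two smaller points. The dichotomy ``at least $k/2$ lines contained in $Z(P)$'' does not force a point of a regulus to be special when $k=3$ or $4$: a point of a doubly ruled quadric lies on exactly two contained lines, which already meets the $\ge k/2$ threshold, so your claim that reguli ``contribute nothing'' for $k\ge 3$ requires the sharper split ``$\ge 3$ contained lines (hence a critical or flat point) versus $\ge k-2$ transverse crossings.'' And the flecnode polynomial is the tool for the $2$-rich/regulus statement (Theorem~\ref{gk2rich}), not for the $k\ge 3$ count, where \cite{GK2} instead works with critical points and with flat points defined by the vanishing of the second fundamental form.
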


We need a slightly refined version of Theorem~\ref{gk2rich}.
%, which follows in a straightforward manner from the analysis in \cite{GK2}.
We thank J\'anos Koll\'ar for providing us with a detailed proof of the required statement; his proof (of, in fact, a slightly stronger statement) is given in the Appendix.
\begin{thm}\label{gkrefined}
Let $L$ be a set of $n$ lines in $\R^3$, such that:\\
(i) Every plane in $\R^3$ contains at most $\lceil n^{1/2}\rceil$ lines of $L$.\\
(ii) Every regulus in $\R^3$ contains at most $2n$ pairs of intersecting lines.\\
Then the number of $2$-rich points in $L$ is at most $O(n^{3/2})$. \hfill$\qed$
\end{thm}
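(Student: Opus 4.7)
My plan is to adapt the proof of Guth and Katz's Theorem~\ref{gk2rich} and identify the single step in their argument where the hypothesis ``at most $\sqrt n$ lines per regulus'' is invoked, replacing it with the weaker hypothesis (ii). Recall that the Guth--Katz proof proceeds by polynomial partitioning: one chooses a polynomial $P$ of degree $D=\Theta(n^{1/2})$ whose zero set $Z(P)$ decomposes $\R^3$ into open cells, each meeting $O(n/D^2)=O(n^{1/2})$ lines of $L$. The 2-rich points split into those strictly inside cells (handled by a K\H{o}v\'ari--S\'os--Tur\'an style count in each cell, contributing $O(n^{3/2})$ in total) and those lying on $Z(P)$, which is where the structural hypotheses enter.

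For points on $Z(P)$, a line of $L$ either lies entirely inside some irreducible component of $Z(P)$, or meets $Z(P)$ in at most $D$ points; the 2-rich contribution of the latter type is at most $O(nD)=O(n^{3/2})$. For lines lying in irreducible components, one invokes the classical structure theorem (Cayley--Salmon, refined via the flecnode polynomial in \cite{GK2}): an irreducible surface containing more than a bounded number of lines of $L$ is either a plane, a singly-ruled quadric (cone or cylinder, contributing no 2-rich points since its ruling lines are pairwise disjoint), or a doubly-ruled quadric (a regulus). For planes, hypothesis (i) yields at most $\lceil n^{1/2}\rceil$ lines per plane, and hence $O(n)$ 2-rich points per plane; since $Z(P)$ contains at most $D=O(n^{1/2})$ plane components, their total contribution is $O(n^{3/2})$. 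For reguli, hypothesis (ii) bounds directly the 2-rich points on each regulus by $2n$, because each 2-rich point on a regulus is precisely an intersecting pair of lines from its two rulings; and $Z(P)$ contains at most $D/2=O(n^{1/2})$ quadric components, again totalling $O(n^{3/2})$.

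The main obstacle, and the reason Koll\'ar's appendix is required, is to establish rigorously the dichotomy that any irreducible component of $Z(P)$ hosting many pairwise-intersecting lines of $L$ must be either a plane or a regulus; higher-degree ruled surfaces, which a priori could appear as components of $Z(P)$, must either be excluded or shown to carry only boundedly many pairwise-intersecting lines in terms of the two hypotheses. A direct dimension count shows that an irreducible surface of degree $\geq 3$ carries at most a one-parameter family of lines, but bounding the number of \emph{intersecting pairs} among them requires a delicate algebraic-geometric analysis of the surface's geometry (precisely what Koll\'ar's appendix supplies). Once this dichotomy is in hand, summing the contributions from the cells, from the planar components, and from the reguli yields the claimed $O(n^{3/2})$ bound, completing the proof.
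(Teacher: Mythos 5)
Your central observation is the right one, and it is exactly the one new ingredient in the paper's proof: hypothesis (ii) caps the internal $2$-rich points of each regulus at $2n$, a surface of degree $O(n^{1/2})$ containing all of $L$ has at most $O(n^{1/2})$ quadric components, and so the reguli contribute $O(n^{3/2})$ in total. Koll\'ar's appendix (Proposition~\ref{thm.6.variant}) records the same point in a different normalization: the existing proof in \cite{Kollar} only ever uses the bound $I({\mathcal L}_j)\le \tfrac{c}{2}n_j\sqrt{m}$ for a regulus carrying $n_j$ lines, which follows from the old count when $n_j\le 2c\sqrt{m}$ and from the new hypothesis (since $c^2m\le n_j\tfrac{c}{2}\sqrt{m}$) when $n_j>2c\sqrt{m}$. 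Note, though, that the paper's route is not a polynomial-partitioning argument: following \cite{GK2} and \cite{Kollar}, one places all of $L$ on a surface $S$ of degree $\le\sqrt{6m}-2$ by interpolation (degree reduction) and then counts intersections component by component; only the regulus paragraph of that count is modified.

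There is a genuine gap in your scaffold as written. The classification you assert --- that an irreducible component of $Z(P)$ containing many lines of $L$ must be a plane or a quadric --- is false: an irreducible ruled surface of any degree $d\ge 3$ carries a one-parameter family of lines and so may contain arbitrarily many lines of $L$; the flecnode/Cayley--Salmon argument only forces \emph{non-ruled} components of degree $d$ to contain $O(d^2)$ lines. You then acknowledge the higher-degree ruled case but defer its resolution to ``Koll\'ar's appendix,'' and this is where the attempt breaks down: the appendix does not address that case at all --- it explicitly changes only the regulus count of \cite[Paragraph 19]{Kollar} and declares the rest of the proof unchanged. The higher-degree ruled components are handled in \cite{Kollar} and \cite{GK2} by the fact that on an irreducible, singly ruled, non-planar and non-conical surface of degree $d$, all but at most two exceptional generators meet only $O(d)$ of the other generators; that analysis is entirely independent of the regulus hypothesis, which is precisely why the paper can get away with modifying a single step. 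Two smaller slips: with $D=\Theta(n^{1/2})$ each cell of a partitioning would meet $O(1)$ lines, not $O(n^{1/2})$; and the lines of a cone are not pairwise disjoint (they all pass through the apex), although this still yields only one internal $2$-rich point per cone and so does not affect the final bound.
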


Combining Theorems \ref{gkthm45} and \ref{gkrefined}, we conclude:
\begin{thm}\label{gkcor}
Let $L$ be a set of $n$ lines in $\R^3$, such that:\\
(i) Every plane in $\R^3$ contains at most $\lceil n^{1/2}\rceil$ lines of $L$.\\
(ii) Every regulus in $\R^3$ contains at most $2n$ pairs of intersecting lines.\\
Let $2\le k\le n$. Then the number of points in $\R^3$ incident to at least $k$ lines of $L$ is at most
$
O\left(n^{3/2}k^{-2}+nk^{-1}\right).$
\end{thm}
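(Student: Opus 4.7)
The plan is to split into the cases $k=2$ and $k\ge 3$, applying the two available results separately and observing that the two bounds agree with the single expression $O(n^{3/2}k^{-2}+nk^{-1})$.

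For $k\ge 3$, assumption (i) already matches the hypothesis of Theorem~\ref{gkthm45} (up to a harmless ceiling, which only affects absolute constants), so a direct application yields the claimed bound $O(n^{3/2}k^{-2}+nk^{-1})$; assumption (ii) is not needed here.

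For $k=2$, assumptions (i) and (ii) are precisely the hypotheses of Theorem~\ref{gkrefined}, which gives $O(n^{3/2})$ many $2$-rich points. This is consistent with the general expression, since plugging in $k=2$ yields $O(n^{3/2}\cdot 2^{-2} + n\cdot 2^{-1}) = O(n^{3/2})$, with the $n^{3/2}$ term dominating.

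In both regimes the same upper bound holds, so the theorem follows. The only subtle point is making sure that the numerical hypothesis ``at most $\sqrt n$ lines in a plane'' used in Theorem~\ref{gkthm45} is compatible with our stated ``at most $\lceil n^{1/2}\rceil$'': this differs by at most one line per plane and is therefore absorbed in the implicit constants, so there is no real obstacle here — the step essentially amounts to combining the two quoted theorems. The genuinely nontrivial content is already packaged inside Theorems~\ref{gkthm45} and \ref{gkrefined}, with the latter being proved by Koll\'ar in the Appendix.
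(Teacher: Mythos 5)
Your proposal is correct and is exactly the intended argument: the paper gives no explicit proof, simply stating that Theorem~\ref{gkcor} follows by ``combining'' Theorems~\ref{gkthm45} and~\ref{gkrefined}, and your case split ($k=2$ handled by Theorem~\ref{gkrefined}, $k\ge 3$ by Theorem~\ref{gkthm45}, with the $\lceil n^{1/2}\rceil$ versus $\sqrt n$ discrepancy absorbed into the implied constants) is precisely that combination spelled out.
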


\section{Proof of Theorem~\ref{main}}\label{sec:proofmain}
Consider an embedding $\p=(p_1,\ldots,p_n)$ of the vertices of $G$ in the plane, such that no three of the points are collinear.
We prove the theorem by induction on the number, $n$, of vertices in $G$.
We assume that $G$ has $C_n n^{3/2}$ edges, and later optimize $C_n$, and get $C_n=C_0\log n$, for some absolute constant $C_0$, as in the statement of the theorem.
For the induction's base cases, we take $C_3\le\cdots\le C_{n_0}$ to be large enough so that for every  $3\le k\le n_0$ we will have $C_kk^{3/2}\ge \binom{k}{2}$. This means that a graph $G$ with $k$ vertices and $C_kk^{3/2}$ edges, for $3\le k\le n_0$, is necessarily the complete graph on $k$ vertices. Since every framework of the complete graph is rigid, this proves the base case.

Assume that the statement is true for every $n'$ with 
$3\le n'<n$ and we prove it for $n$.

\paragraph{An associated line configuration in $\R^3$.} Let $\p'=(p_1',\ldots,p_n')$ be another embedding of the vertices of $G$, taken from a neighborhood $B$ of $\p$, with the property that for every edge $\{i,j\}$ of $G$, we have 
$\|p_i-p_j\|=\|p_i'-p_j'\|$. That is, we take $\p'$ such that the frameworks $(G,\p)$ and $(G,\p')$ are equivalent.
Assume further that each $p_i'$ is taken from a small neighborhood of $p_i$ so that in particular no three points of $\p'$ are collinear.
Moreover, we may assume that no triple $p_i',p_j',p_k'$ is the reflection of $p_i,p_j,p_k$. Indeed, taking the neighborhoods of the points $p_i$ sufficiently small we can ensure that the orientation (sign of the determinant of the vectors $\overrightarrow{p_ip_j},\overrightarrow{p_ip_k}$) is the same in $\p$ and in $\p'$ for every triple $i,j,k$.

For each $i=1,\ldots, n$ put $\ell_i:=\ell_{p_i,p_i'}$ and consider the set of lines 
$
L=\{\ell_1,\ldots,\ell_n\}.
$
Note that for every edge $\{i,j\}$ in $G$, the corresponding lines $\ell_i,\ell_j$ necessarily intersect.
The other direction is not true; that is, the lines $\ell_i,\ell_j$ may intersect even if $\{i,j\}$ is not an edge in $G$.

Our assumptions on $\p$ and $\p'$, combined with Lemma~\ref{cong},  
imply that no three lines of $L$ lie on a common plane.

We claim that taking the neighborhood $B$ of $\p$ to be sufficiently small, and taking $\p'\in B$, we can guarantee that no eight lines of $L$ lie on a common regulus $R$ with at least three lines on each of the rulings of $R$ (note that this means in particular that no regulus in $\R^3$ contains more than $2n$ pairs of intersecting lines).
Indeed, fix any ordered $8$-tuple $\pi=(p_{i_1},\ldots,p_{i_8})$ (a subset of the points of $\p$). Applying Corollary~\ref{cor:noruled} (with $m=5$), and using our assumption that no three points of $\p$ are collinear, we get that for some neighborhood $B_\pi$ of $\pi$, and for every $\pi'=(p_{i_1}',\ldots,p_{i_8}')\in B_\pi$, the lines 
$\{\ell_{p_{i_1},p_{i_1}'},\ldots,\ell_{p_{i_8},p_{i_8}'}\}$ do not lie on a common regulus such that $\{\ell_{p_{i_1},p_{i_1}'},\ell_{p_{i_2},p_{i_2}'},\ell_{p_{i_3},p_{i_3}'}\}$ lie on one ruling of the regulus and $\{\ell_{p_{i_1},p_{i_1}'},\ldots,\ell_{p_{i_8},p_{i_8}'}\}$ on the other ruling of the regulus. 
Repeating this for each ordered $8$-tuples of $\p$, we see that there exists a neighborhood $B$ of $\p$ such that the claim follows.

Note in addition that, by Corollary~\ref{rigidlines}, if for every choice of $\p'$, in any arbitrarily small neighborhood of $\p$, the lines of $L$ are concurrent, this means that the framework $(G,\p)$ is rigid, and we are done. We therefore assume that the lines of $L$ are not concurrent.

%%%
%%%
%%%
%%%
%%%
\paragraph{No dense subgraphs of $G$.} Note that, by our induction hypothesis, if $G$ contains a subgraph with $3\le n'<n$ vertices and $C_{n'}(n')^{3/2}$ edges, we are done. 
Therefore we assume that every subgraph of $G$ with $3\le n'<n$ vertices has less than $C_{n'}(n')^{3/2}$ edges.

We call a point in $\R^3$ {\it $k$-rich} if it is incident to exactly $k$ lines of $L$.
Such a point is the intersection point of exactly $\binom{k}{2}$ pairs of lines, but possibly only a subset of those pairs correspond to edges of $G$.
Our assumption that $G$ has no dense subgraphs implies in particular, that for every $k$-rich point, with $3\le k<n$, the number of pairs of lines meeting at that point that also form an edge in $G$ is at most $C_kk^{3/2}$.

Clearly, every $2$-rich point, is the intersection of exactly one pair of lines and hence corresponds to at most one edge of $G$. We set $C_2$ to satisfy $C_22^{3/2}\ge 1$. 

For $t=2,\ldots,\log n$, let $E_t\subset E$ be the subset of edges that meet at a $k$-rich point for $2^{t-1}\le k\le 2^{t}$. 
Clearly, we have $E=\bigcup_{t=2}^{\log n}E_t$. 
We apply Theorems~\ref{gkcor} to upper bound $\sum_{t=1}^{\log (n/d)}|E_t|$, for some parameter $d$, which we choose later. We split the sum into two separate sums, according to which additive term in the bound from Theorem~\ref{gkcor} dominates. 

\paragraph{Edges meeting at a $k$-rich point, for $2\le k \le n^{1/2}$:}
For $2\le t<\tfrac12 \log n$, we have, by Theorem~\ref{gkcor}, that
$$
|E_t|\le \frac{\rho n^{3/2}}{2^{2(t-1)}}\cdot C_{2^t}(2^{t})^{3/2}
=4\rho C_{2^t}n^{3/2}\frac{1}{2^{t/2}},
$$
where $\rho$ is some absolute constant (given implicitly in Theorem~\ref{gkcor}).
Thus
\begin{align*}
\sum_{t=2}^{\lfloor\tfrac12 \log n\rfloor}|E_t|
&\le 4\rho C_{n^{1/2}}n^{3/2}\sum_{t=2}^{\lfloor\tfrac12 \log n\rfloor}\frac{1}{2^{t/2}}\\
&\le 4\rho C_{n^{1/2}}n^{3/2}\cdot \frac{\tfrac12(1-\frac{2^{1/2}}{n^{1/4}})}{1-2^{-1/2}}\\
&\le \rho'C_{n^{1/2}}n^{3/2},
\end{align*}
for some absolute constant $\rho'$.

\paragraph{Edges meeting at a $k$-rich point, for $n^{1/2}\le k \le n/d$:}
Similarly, for $\tfrac12 \log n \le t\le \log (n/d)$, where $d>2$ is a parameter, we have
$$
|E_t|\le \frac{\rho n}{2^{t-1}}\cdot C_{2^t}(2^{t})^{3/2}=2\rho C_{2^t}n2^{t/2},
$$
for some absolute constant $\rho$.
Thus
\begin{align*}
\sum_{t=\lceil \tfrac12 \log n\rceil}^{\lfloor \log (n/d)\rfloor}|E_t|
&\le 2\rho C_{n/d}n\sum_{t=\lceil\tfrac12 \log n\rceil}^{\lfloor \log (n/d)\rfloor}2^{t/2}\\
&\le 2\rho C_{n/d}n\cdot \frac{2^{1/2}n^{1/4}}{2^{1/2}-1}\left(\left(\frac{n^{1/2}}{d}\right)^{1/2}-1\right)\\
&\le \frac{\rho''}{\sqrt d}C_{n/d}n^{3/2},
\end{align*}
for some absolute constant $\rho''$.

Combining the two inequalities above, we get 
\begin{equation}\label{poor}
\sum_{t=2}^{\lfloor \log (n/d)\rfloor}|E_t|\le B\left(C_{n^{1/2}}+\frac{C_{n/d}}{\sqrt d}\right)n^{3/2},
\end{equation}
where $B:=\max\{\rho',\rho''\}$ is an absolute constant. That is, \eqref{poor} gives an upper bound on the number of edges of $G$ that correspond to pairs of lines meeting at a $k$-rich point, with $2\le k\le n/d$.

Recall our assumption that $G$ has at least $C_nn^{3/2}$ edges (and each edge corresponds to a pair of meeting lines of $L$).
We take $C_n$ so that 
$$
C_n\ge 2B\left(C_{n^{1/2}}+\frac{C_{n/d}}{\sqrt d}\right).$$
With this choice, and in view of \eqref{poor}, we get that 
$$
\sum_{t=2}^{\lfloor \log (n/d)\rfloor}|E_t|\le \frac12 C_n n^{3/2}.
$$
We conclude that at least half of the edges of $G$ meet at a $k$-rich point, for $k> n/d$.
In particular, there exists a point which is $k$-rich, with $k> n/d$.

\paragraph{$\alpha n$-rich point.} Assume first that there exists a point which is $\alpha n$-rich, with $1/d\le \alpha\le 2/3$. Let $L_1$ denote the subset of $\alpha n$ lines going through this point. If the number of edges meeting at that point (i.e., the number of pairs of lines of $L_1$ that correspond to an edge in $G$) is at least $C_{\alpha n}(\alpha n)^{3/2}$, then we are done by induction. Consider the subset of lines $L_2:=L\setminus L_1$ that do not go through this $\alpha n$-rich point. If the number of edges induced by $L_2$  is at least $C_{(1-\alpha)n}((1-\alpha)n)^{3/2}$, we are again done by induction.
Finally, note that every line of $L_2$ intersects at most one line of $L_1$. Otherwise, we would have three coplanar lines, contradicting our assumption.
Therefore, the total number of edges we have is at most
$$
C_{\alpha n}(\alpha n)^{3/2}+C_{(1-\alpha)n}((1-\alpha)n)^{3/2} +(1-\alpha)n,
$$
which must be at least $C_nn^{3/2}$, by our assumption on the number of edges in $G$. Thus
$$
C_{\alpha n}\alpha^{3/2}+C_{(1-\alpha)n}(1-\alpha)^{3/2} +(1-\alpha)n^{-1/2}\ge C_n.
$$
Using $C_{\alpha n}, C_{(1-\alpha)n}\le C_n$ (by monotonicity of the sequence $C_n$), this implies
$$
C_{n}(\alpha^{3/2}+(1-\alpha)^{3/2}) +(1-\alpha)n^{-1/2}\ge C_n
$$
or
\begin{equation}\label{ineq:ansmall}
\frac{1-\alpha}{C_nn^{1/2}}\ge 1-\alpha^{3/2}-(1-\alpha)^{3/2}.
\end{equation}
Using $1/d\le \alpha\le 2/3$, we have
$$
\frac{1-\alpha}{C_nn^{1/2}}\le\frac{d-1}{dC_nn^{1/2}}.
$$
Combined with \eqref{ineq:ansmall}, the last inequality implies
\begin{equation}\label{ineq:ansmall2}
1-\alpha^{3/2}-(1-\alpha)^{3/2} \le 
\frac{d-1}{dC_nn^{1/2}}.
\end{equation}
Note that for every $0<\alpha<1$, the left-hand side of \eqref{ineq:ansmall2} is positive. Moreover, for every closed interval $[a,b]\subset [0,1]$, with $0<a<b<1$, the function $f(\alpha)=1-\alpha^{3/2}-(1-\alpha)^{3/2}$ attains a minimum which is a positive number. Let $\delta_0>0$ denote the minimum of $f$ over $[1/d,2/3]$.
Taking $n_0$ large enough (and recalling that $n\ge n_0$), the right-hand side of \eqref{ineq:ansmall2} can be guaranteed to be smaller than $\delta_0$ (for any positive $\delta_0$).
This yields a contradiction to \eqref{ineq:ansmall2}.

\paragraph{$k$-rich point, with $k>2n/3$.} Assume next that there exists a $k$-rich point with $k>2n/3$. Fix such a point, and denote by $m$ the number of lines not incident to this point. That is, we fix a $(n-m)$-rich point, with $m<n/3$. Note that $m\ge 1$, by our assumption that not all the lines of $L$ are concurrent. 

Similar to the analysis in the previous case above, if the number of edges meeting at the given $n-m$ rich point is at least $C_{n-m}(n-m)^3/2$, then we are done by induction. Thus, we assume this is not the case.
Note that in this case, and if $m=2$, we get that in this case the total number of edges in $G$ is at most
$$
C_{n-2}(n-2)^{3/2}+1+2(n-2),
$$
where here we used our assumption that no three lines of $L$ lie on a common plane. So we must have 
$$
C_{n-2}(n-2)^{3/2}+1+2\ge C_nn^{3/2}
$$
which implies
$$
3\ge C_n(n^{3/2}-(n-2)^{3/2}),
$$
which yields a contradiction, taking $C_n$ larger than some absolute constant. 
So we must have $m\ge 3$. 

Next, if the number of edges among the $m$ lines not incident to our $(n-m)$-rich point is at least $C_mm^{3/2}$, we are again done by induction.
Otherwise, we have that the total number of edges is at most
$$
C_{n-m}(n-m)^{3/2}+C_m m^{3/2}+m,
$$
which, on the other hand, must be at least $C_n n^{3/2}$, since this is the total number of edges in $G$, by assumption.
Using $C_m, C_{n-m}\le C_n$, this implies 
$$
C_{n}(n-m)^{3/2}+C_n m^{3/2}+m\ge C_n n^{3/2}.
$$
or
\begin{equation}\label{ineq:anlarge}
(n-m)^{3/2}+ m^{3/2}+\frac{1}{C_n}m\ge n^{3/2}.
\end{equation}

Consider the function $f(x)=(n-x)^{3/2}+x^{3/2}+\tfrac1{C_n}x$. Note that $f$ is monotone decreasing in $x$. Indeed,
$$
f'(x)=-\tfrac32(n-x)^{1/2}+\tfrac32x^{1/2}+\tfrac{1}{C_n},
$$
and we have $f'(x)<0$ if
$$
\tfrac{2}{3C_n}<(n-x)^{1/2}-x^{1/2}.
$$
The last inequality holds for instance for every $x\le n/3$. Thus $f$ is monotone decreasing in the range of $1\le x\le n/3$.
In particular, $f(1)\ge f(m)$, for $m$ in our range, and the inequality \eqref{ineq:anlarge} implies
$$
f(1)\ge n^{3/2},$$
which is a contradiction, taking $n_0$ sufficiently large.

To summarize, in at least one of the two cases analyzed above it must be possible to apply the induction hypothesis; otherwise, in each of the two cases, we get a contradiction. This completes the proof of the theorem, for any monotone increasing function $C_n$ satisfying 
$$
C_n\ge 2B\left(C_{n^{1/2}}+\frac{C_{n/d}}{\sqrt d}\right).$$
Solving the recurrence relation, one can take $C_n=C_0\log n$, for some absolute value $C_0>0$. This completes the proof of the theorem. 
\hfill$\qed$

\section{Proof of Theorem~\ref{main2}}\label{sec:hypercube}
Let $H_d$ be the graph induced by a hypercube in $\R^d$. That is, each vertex corresponds to a $d$-tuple in $\{0,1\}^d$, and a pair of vertices are connected by an edge if and only if the corresponding $d$-tuples are different by exactly one entry. So $H_d$ has $2^d$ vertices and $d2^{d-1}$ edges. 

We now describe an embedding $\p$ of the vertices of $H_d$ in $\R^2$. For this, we start with an embedding $\bar\p$ of $H$ in $\R^d$. We take 
the standard embedding of the hypercube, namely, we map a vertex with corresponding $d$-tuple $(b_1,\ldots,b_d)$, to the point $(b_1,\ldots,b_d)$ in $\R^d$. 

\begin{clm}
No three vertices of $H_d$ are embedded by $\bar\p$ to a common line in $\R^d$. 
\end{clm}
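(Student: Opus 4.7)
The plan is to prove the contrapositive by a direct coordinate analysis. Suppose, for contradiction, that three distinct vertices $a,b,c \in \{0,1\}^d$ lie on a common line. Since any two distinct points determine the line, there exists $t \in \R$ such that $b = a + t(c-a)$, and because $b \notin \{a,c\}$ we have $t \notin \{0,1\}$.

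Next I would zoom in on a single coordinate. Because $a \ne c$, there is at least one index $i$ with $a_i \ne c_i$, and for such an index $c_i - a_i \in \{-1,+1\}$. Reading off the $i$-th coordinate of $b = a + t(c-a)$ gives
\[
t \;=\; \frac{b_i - a_i}{c_i - a_i},
\]
and since the numerator lies in $\{-1,0,1\}$ and the denominator in $\{-1,+1\}$, the same $t$ must belong to $\{-1,0,1\}$. The values $t=0$ and $t=1$ are already excluded by distinctness, so $t=-1$.

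But then at every index $i$ with $a_i \ne c_i$ we obtain $b_i = 2a_i - c_i$, which equals $-1$ when $(a_i,c_i)=(0,1)$ and $2$ when $(a_i,c_i)=(1,0)$. Either way this contradicts $b_i \in \{0,1\}$, completing the proof.

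There is no real obstacle here; the only mild subtlety is to parameterize with arbitrary $t \in \R$ rather than $t \in [0,1]$, so that the argument simultaneously handles all three possible orderings of $a,b,c$ along the line. Everything else is a one-coordinate check.
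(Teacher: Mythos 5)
Your proof is correct and follows essentially the same route as the paper: both arguments parametrize the line through two vertices, restrict attention to a single coordinate where they differ, and observe that the affine combination cannot land in $\{0,1\}$ unless the parameter is $0$ or $1$. The paper phrases it as "no third vertex lies on the line through two given vertices," while you argue by contradiction from three collinear vertices, but the underlying one-coordinate check is identical.
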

\begin{proof}
Consider two distinct $d$-tuples $(b_1,\ldots,b_d)$ and $(b_1',\ldots,b_d')$. Assume without loss of generality that $b_1\neq b_1'$. Then,  for every $ t\in\R\setminus\{0, 1\}$, we have $tb_1+(1-t) b_1'\not\in\{0,1\}$. Thus no other point on the line connecting $(b_1,\ldots,b_d)$ and $(b_1',\ldots,b_d')$ is a vertex of $H_d$.
\end{proof}

Identify a point in $\R^{2d}$ with a $2\times d$ matrix, regarded as a linear transformation from $\R^d$ to $\R^2$.
We define $\p:=T\circ\bar\p$, where $T:\R^d\to \R^2$ is a linear transformation. We choose $T\in \R^{2d}$ so that with this choice no three distinct vertices of $H_d$ are embedded by $\p$ to a common line and no six distinct vertices of $H_d$ are embedded by $\p$ to a common conic section.
To prove the existence of such $T$ we need the following two claims.
\begin{clm}\label{genericTcoll}
Let $q_1,q_2,q_3\in\R^d$ be three distinct non-collinear points.
Then there exists an algebraic subvariety $Z\subset \R^{2d}$, of codimension at least one, such that for every $T\in \R^{2d}\setminus Z$, the points $Tq_1,Tq_2, Tq_3$ are not collinear.
\end{clm}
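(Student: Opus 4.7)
The plan is to exhibit $Z$ explicitly as the zero set of a single polynomial in the entries of $T$, and then produce one concrete $T$ off this locus to conclude that $Z$ is a proper subvariety.

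First, I would encode the collinearity condition algebraically. Writing $T$ as a $2\times d$ matrix whose entries are the coordinates on $\R^{2d}$, the three image points $Tq_1,Tq_2,Tq_3$ lie on a common line in $\R^2$ if and only if the two vectors $T(q_2-q_1)$ and $T(q_3-q_1)$ are linearly dependent in $\R^2$, which happens exactly when the $2\times 2$ determinant
\[
F(T) := \det\bigl[\, T(q_2-q_1) \ \big| \ T(q_3-q_1)\,\bigr]
\]
vanishes. Since $F$ is a bilinear form in the rows of $T$, it is a polynomial of degree $2$ in the $2d$ entries of $T$. Set $Z := \{T \in \R^{2d} : F(T) = 0\}$; this is an algebraic subvariety of $\R^{2d}$.

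Next I would check that $Z$ is a \emph{proper} subvariety, which is equivalent to $F$ not being the zero polynomial. Here is where the non-collinearity of $q_1,q_2,q_3$ in $\R^d$ enters: the two difference vectors $v_1 := q_2-q_1$ and $v_2 := q_3-q_1$ are linearly independent in $\R^d$. Hence I can extend $\{v_1,v_2\}$ to a basis of $\R^d$ and define $T_0:\R^d\to\R^2$ to be the linear map sending $v_1\mapsto e_1$, $v_2\mapsto e_2$, and the remaining basis vectors to $0$. For this specific $T_0$ we have $F(T_0) = \det[e_1\mid e_2] = 1 \neq 0$, so $T_0 \notin Z$.

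Since $F$ is a nonzero polynomial, $Z$ is a proper algebraic subvariety of $\R^{2d}$ and therefore has codimension at least one, as required. No step here is a real obstacle; the only content is translating collinearity in $\R^2$ into the vanishing of $F(T)$ and exploiting the hypothesis that $q_1,q_2,q_3$ span a $2$-dimensional affine subspace of $\R^d$ to produce a non-vanishing point.
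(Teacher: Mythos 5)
Your proof is correct and follows essentially the same route as the paper: both encode collinearity of the images via the $2\times 2$ determinant of $\bigl[T(q_2-q_1)\mid T(q_3-q_1)\bigr]$, viewed as a polynomial in the entries of $T$, and both exhibit a witness $T$ that is injective on the plane spanned by $q_2-q_1$ and $q_3-q_1$ to show the polynomial is not identically zero. Your version is just slightly more explicit about the choice of witness.
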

\begin{proof}
There exists a polynomial, $P$, over $6$ variables and with rational coefficients, such that, for every $p_1,p_2,p_3\in \R^2$, $P(p_1,p_2,p_3)=0$ if and only if the points $p_1,p_2,p_3$ are collinear.
Namely, $P$ is just the determinant of the $2\times 2$ matrix with columns $p_2-p_1$ and $p_3-p_1$.
Consider the equation 
\begin{equation}\label{collineareq}
P(Tq_1,Tq_2,Tq_3)=0.
\end{equation}
Since $q_1,q_2,q_3$ are given, this is an equation in the entries of $T$, which defines a subvariety of $\R^{2d}$.

It is easy to see that \eqref{collineareq} is not identically zero. Indeed, consider a linear transformation $T$ which maps the plane spanned by the vectors $q_2-q_1,q_3-q_1$ (this is a plane through the origin) to $\R^2$ injectively. Such $T$ does not satisfy \eqref{collineareq}. 
Thus \eqref{collineareq} defines a subvariety $Z$ of $\R^{2d}$ of codimension at least one. This proves the claim.\end{proof}

For every triple $u_1,u_2,u_3$ of vertices of $H_d$, we apply Claim~\ref{genericTcoll} to the points $q_i:=\bar\p(u_i)$ for $i=1,2,3$.
Let $\cal Z$ be the family of algebraic subvariety of $\R^{2d}$ of ``bad'' choices of $T$, given by applying Claim~\ref{genericTcoll} to each triple of vertices.
Since each element of $\cal Z$ is of codimension at least one, and $\cal Z$ is finite, the union of the elements of $\cal Z$ does not cover $\R^{2d}$. Therefore, there exists a choice of $T$ that does not lie on any of the elements of $\cal Z$. 
Using such $T$ in the definition of $\p$, we get that no three distinct vertices of $H_d$ are embedded by $\p$ to a common line.

Finally, we claim that the framework $(H_d,\p)$ does not have a rigid subframework of size larger than two.
In fact, we prove the following stronger property. 

\begin{clm} Let $x,y$ be any pair of distinct vertices of $H_d$, such that $\{x,y\}$ is not an edge of $H_d$. Consider a neighborhood, $B$, of $\p$ in $\R^2$ arbitrarily small. Then there exists an embedding $\p'\in B$, such that $\p$ and $\p'$ are equivalent, but 
$\|\p(x)-\p(y)\|\neq\|\p'(x)-\p'(y)\|$.
\end{clm}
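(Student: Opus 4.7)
The plan is to construct an explicit $d$-parameter family of flexes of $\p$ which preserve every edge of $H_d$, and then to show that for any non-edge $\{x,y\}$ the distance $\|\p'(x)-\p'(y)\|$ must change to first order along at least one direction in this family.

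The family arises from the affine structure of the hypercube: writing $w_i := T(e_i) \in \R^2$, the embedding is $\p(b_1,\ldots,b_d) = \sum_{i=1}^d b_i w_i$, and every edge of $H_d$ connects a pair of vertices differing in exactly one coordinate. Hence, for any $\theta = (\theta_1,\ldots,\theta_d) \in \R^d$, setting
\[
\p'_\theta(b_1,\ldots,b_d) := \sum_{i=1}^d b_i R_{\theta_i}(w_i),
\]
where $R_{\theta}$ denotes the planar rotation by angle $\theta$, preserves every edge length of $H_d$. Thus $(H_d, \p'_\theta)$ is equivalent to $(H_d,\p) = (H_d, \p'_0)$ for every $\theta$, and $\p'_\theta \in B$ whenever $\theta$ is sufficiently close to $0$.

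Next I fix a non-edge $\{x,y\}$, set $d_i := b_i(x) - b_i(y) \in \{-1,0,1\}$, and let $S := \{i : d_i \neq 0\}$, so $|S| \ge 2$ since $\{x,y\}$ is not an edge. A short computation, using that the derivative of $R_{\theta_i}$ at $\theta_i=0$ is the $90^\circ$ rotation $J$, together with $\langle J w_{i_0}, w_{i_0}\rangle = 0$, yields
\[
\frac{\partial}{\partial \theta_{i_0}}\bigg|_{\theta=0} \|\p'_\theta(x)-\p'_\theta(y)\|^2 \;=\; 2 d_{i_0}\, \bigl\langle J w_{i_0},\; \p(x) - \p(y)\bigr\rangle,
\]
which vanishes precisely when $\p(x)-\p(y)$ is parallel to $w_{i_0}$.

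The anticipated obstacle is the possibility that this partial derivative vanishes simultaneously for every $i_0 \in S$. In that case, $\p(x) - \p(y)$ would be parallel to $w_{i_0}$ for every $i_0 \in S$, and since $\p(x) \neq \p(y)$ by injectivity of $\p$, this would force the vectors $\{w_{i_0} : i_0 \in S\}$ to be pairwise parallel. But for any two indices $i, j \in S$, the three vertices $\vec{0}, e_i, e_j$ of $H_d$ are distinct and are embedded by $\p$ to $0, w_i, w_j$; the no-three-collinear hypothesis then forces $w_i \not\parallel w_j$, a contradiction. Hence at least one partial derivative is nonzero, and perturbing only $\theta_{i_0}$ by a small nonzero amount (with the remaining $\theta_j = 0$) produces the desired $\p' \in B$ with $\|\p'(x)-\p'(y)\| \neq \|\p(x)-\p(y)\|$.
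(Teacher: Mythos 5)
Your proof is correct, and it takes a genuinely different route from the paper's. The paper argues by induction on $d$: it splits $H_d$ into two copies of $H_{d-1}$, handles a non-edge inside one copy by the induction hypothesis, and handles a non-edge straddling the two copies by translating one copy's image by $R_\eps(w_d)$ instead of $w_d$ (it then asserts the resulting change of distance is ``easy to see''). You instead exhibit the full $d$-parameter family of flexes $\p'_\theta = \sum_i b_i R_{\theta_i}(w_i)$ all at once --- of which the paper's cross-case deformation is the single slice $\theta=(0,\dots,0,\eps)$ --- and detect the change of a non-edge length by a first-order derivative computation. Your approach buys a cleaner, non-inductive argument with no unproved ``easy to see'' step: the only possible degeneracy (all relevant partials vanishing) is ruled out exactly by the no-three-collinear hypothesis applied to the triples $0, e_i, e_j$, which makes transparent where that hypothesis enters; it also shows more, namely that the length of \emph{every} non-edge can be changed to first order within a single explicit flex family. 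The paper's induction, by contrast, needs only one rotation parameter at a time and in the cross case does not actually require non-collinearity (there the distance changes at second order even when the first derivative vanishes), but its base case and its ``extend by symmetry'' step are left to the reader. The small points you rely on --- that $|S|\ge 2$ for a non-edge, that $\p$ is injective, and that each $w_i\neq 0$ --- all follow from the non-collinearity guaranteed by the choice of $T$, so there is no gap.
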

\begin{proof}
We prove the claim by induction on $d$.
The base case $d=2$ is easy to see.
Consider $d>2$. The vertices of $H_d$ can be regarded as a disjoint union of two copies $H_{d-1}^{(1)}$, $H_{d-1}^{(2)}$of $H_{d-1}$.
Note that each vertex $u\in H_{d-1}^{(1)}$ can be associated with a vertex  $u'\in H_{d-1}^{(2)}$, such that $\{u,u'\}$ is an edge in $H_d$.
Moreover, note that by the definition of our embedding $\p$, all the edges of this form (edges between a vertex of $H_{d-1}^{(1)}$ and a vertex of $H_{d-1}^{(2)}$) have the same length $\ell$.

Let $x,y$ be a pair of distinct vertices of $H_d$ such that $\{x,y\}$ is not an edge in $H_d$.
Assume first that the pair $x,y$ is in one of the copies of $H_{d-1}$, say in $H_{d-1}^{(1)}$. Let $\q:=\p_{|_{H_{d-1}^{(1)}}}$ be the embedding $\p$ of $H$, restricted the subgraph $H_{d-1}^{(1)}$.
By the induction hypothesis, for every arbitrarily small neighborhood of $\q$,  there exists an embedding $\q'$ in this neighborhood, such that $\q,\q'$ are equivalent, but $\|\q(x)-\q(y)\|\neq \|\q'(x)-\q'(y)\|$. By the symmetry of $H_{d-1}^{(1)}$ and $H_{d-1}^{(2)}$ it is easy to see that this can be extended to an embedding $\p'$ of $H_d$ which is congruent to $\p$.
This proves the claim in this case.

Assume next that, say, $x\in H^{(1)}_{d-1}$, $y\in H^{(2)}_{d-1}$, and recall that $\{x,y\}$ is not an edge in $H_d$.
Consider a neighborhood of $\p$, arbitrarily small.
For each vertex $u\in H_{d-1}^{(1)}$, take a rotation $r_u$ of the plane centered at $u$, with angle of rotation $\eps$. We apply this rotation only to the (unique) vertex $u'\in H_{d-1}^{(2)}$ with the property that $\{u,u'\}$ is an edge in $H_d$. This induces a new embedding $\p'$ of $H_d$. 
Clearly, taking $\eps>0$ sufficiently small, $\p'$ is in the given neighborhood of $\p$. Moreover, since $\p'$ applied to the vertices of $H_{d-1}^{(2)}$ is a translation of $\p'$ applied to $H_{d-1}^{(1)}$, it is clear that by construction that $\p$ and $\p'$ are equivalent.
Finally, we claim that for $\eps$ sufficiently small, we have $\|\p(x)-\p(y)\|\neq \|\p'(x)-\p'(y)\|$.
To see this it is sufficient to restrict our attention  to the vertices $x,y'\in H_{d-1}^{(1)}$ and $x',y\in H_{d-1}^{(2)}$, where $\{x,x'\}$ and $\{y',y\}$ are edges in $H_d$.
Note that since $\{x,y\}$ is not an edge, $x,x',y,y'$ are distinct.
Also, by construction, $\|\p(x)-\p(y')\|= \|\p'(x')-\p'(y)\|$ and $\|\p(x)-\p(x')\|= \|\p'(y')-\p'(y)\|$. It is now easy to see, again by the construction of $\p'$ that 
$\|\p(x)-\p(y)\|\neq \|\p'(x)-\p'(y)\|$, as claimed.
\end{proof}

\vspace{1cm}
\noindent {\bf Acknowledgements} 
The work of the second author has received funding from the European Research Council (ERC) under the European Union’s Horizon 2020 research and
innovation programme (grant agreement No 741420, 617747, 648017). His research is also supported by NSERC and OTKA (K 119528) grants.
The authors also thank Omer Angel and Ching Wong for several useful comments regarding the paper.

\vspace{1cm}
\center{\bf\Large Appendix for ``Dense graphs have rigid parts''}
\center{\bf\large by J\'anos Koll\'ar\footnote[1]{Department of Mathematics, 
Princeton University, 
{\sl kollar@math.princeton.edu}}}
\vspace{1cm}

\noindent Let ${\mathcal L}$ be a set of $m$ distinct lines in ${\mathbb C}^3$.
A weighted number of their intersection points is
$$
I({\mathcal L}):=\sum_{p\in {\mathbb C}^3}  \bigl(r(p)-1\bigr),
$$
where $r(p)$ denotes the number of lines
passing through a point $p$. Our aim is to outline the proof of the
following variant of \cite[Theorem 6]{Kollar}.
The difference is that, unlike in  \cite[Theorem 6]{Kollar}, we allow more 
than
$2c\sqrt{m}$ lines on a regulus (that is, a smooth quadric surface), but 
we restrict the number of intersections between them.

\begin{prop} \label{thm.6.variant}
Let ${\mathcal L}$ be a set of $m$ distinct lines in ${\mathbb C}^3$.
Let $c$ be a constant such that
every   plane  contains at most  $c\sqrt{m}$  of the lines and, for every 
regulus, the lines on it have at most $c^2m$ intersection points with each 
other.
Then
$$
I({\mathcal L})\leq \bigl(29.1+\tfrac{c}2\bigr)\cdot m^{3/2}.
$$
\end{prop}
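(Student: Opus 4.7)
The plan is to adapt the proof of \cite[Theorem 6]{Kollar}, which establishes essentially the same bound under the stronger regulus hypothesis ``each regulus contains at most $c\sqrt{m}$ lines.'' The structure of the argument is the polynomial method in three dimensions. First I would construct a polynomial $P\in\mathbb{C}[x,y,z]$ of degree $D=O(\sqrt{m})$ vanishing on every line of $\mathcal{L}$; this is possible by a standard dimension count, since vanishing on a line imposes $D+1$ linear conditions on the $\binom{D+3}{3}$-dimensional space of polynomials of degree at most $D$. Then I would decompose $V(P)=\bigcup_j S_j$ into irreducible components, which fall into three classes: planes (degree $1$), smooth quadrics, i.e.\ reguli (degree $2$), and irreducible surfaces of degree at least $3$. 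This decomposition partitions $\mathcal{L}$ (with lines lying on several components assigned once) into sets $\mathcal{L}_{\mathrm{pl}}$, $\mathcal{L}_{\mathrm{reg}}$, $\mathcal{L}_{\mathrm{gen}}$.

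The contributions of the three classes to $I(\mathcal{L})$ are bounded separately. The estimate for $\mathcal{L}_{\mathrm{gen}}$ is the main technical content of \cite{Kollar}: one uses the analysis of \emph{flat points} of $P$, where the second fundamental form of a component degenerates along a line, to produce auxiliary lower-degree varieties on which the interesting intersection points must lie, and then concludes by a Bezout-type count. This part of the argument uses only the degree budget and the fact that an irreducible surface of degree $\ge 3$ contains only finitely many lines through a generic point; it does not invoke either of our hypotheses on planes or reguli, so it carries over verbatim and produces the $29.1\, m^{3/2}$ coefficient. For $\mathcal{L}_{\mathrm{pl}}$, the plane hypothesis caps each plane's contribution by $\binom{c\sqrt{m}}{2}$; summing over plane components with total line count $\sum \ell_j\le m$, a short convexity argument of the form $\sum \binom{\ell_j}{2}\le \tfrac{c\sqrt{m}}{2}\sum \ell_j$ produces the $(c/2)\,m^{3/2}$ term.

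The only place where our weakened regulus hypothesis enters is the contribution from $\mathcal{L}_{\mathrm{reg}}$. In Koll\'ar's original argument, the bound ``at most $c\sqrt{m}$ lines per regulus'' is exploited \emph{solely} to bound the number of pairwise intersections on each regulus (on a smooth quadric, intersections come only between lines in opposite rulings, so $a_j+b_j\le c\sqrt{m}$ yields $a_jb_j\le c^2m/4$). Our hypothesis supplies precisely this bound $a_jb_j\le c^2m$ directly, which is exactly the input consumed by Koll\'ar's estimate; after combining with the total degree constraint $\sum \deg S_j\le D$, the resulting regulus contribution is absorbed into the same combined coefficient as in \cite{Kollar}. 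The main obstacle, and the only nontrivial task, is to read through \cite{Kollar} and verify that the lines-per-regulus hypothesis is never used for any purpose other than bounding pairwise intersections; once that auditing is done, substituting our hypothesis into the corresponding inequality and retracing the computation yields the stated bound $\bigl(29.1+\tfrac{c}{2}\bigr)\,m^{3/2}$.
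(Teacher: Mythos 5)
Your proposal follows essentially the same route as the paper's proof: construct a surface of degree $O(\sqrt{m})$ containing all the lines, reuse Koll\'ar's component-by-component count, and observe that the lines-per-regulus hypothesis is consumed only through the bound on pairwise intersections within a regulus. The one step you leave implicit is how the per-regulus hypothesis $I(\mathcal{L}_j)\le c^2m$ gets converted into something summable over reguli: Koll\'ar's count actually needs a bound \emph{linear} in the number $n_j$ of lines on the regulus, namely $I(\mathcal{L}_j)\le \tfrac{c}{2}n_j\sqrt{m}$, and the paper obtains this by a short case split --- if $n_j\le 2c\sqrt{m}$ the original regulus estimate already gives it, while if $n_j\ge 2c\sqrt{m}$ then $c^2m = 2c\sqrt{m}\cdot\tfrac{c}{2}\sqrt{m}\le n_j\tfrac{c}{2}\sqrt{m}$ --- after which $\sum_j n_j\le m$ yields the $\tfrac{c}{2}m^{3/2}$ term. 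Your audit-and-substitute plan would arrive at the same place, but this small inequality is the actual content of the modification.
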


\begin{proof}Following the method of \cite{GK2}, there is an algebraic 
surface
$S$ of degree $\leq \sqrt{6m}-2$ that contains all the lines in 
${\mathcal L}$.
We decompose $S$ into its irreducible components $S=\cup_j S_j$.

Now we follow the count as in \cite[Paragraph 24]{Kollar}.
The bound for external intersections  (when a line not on $S_j$ meets a 
line on $S_j$) is the same as in \cite[Paragraph 18]{Kollar}. The remaining 
internal   intersections  (when a line  on $S_j$ meets a line on the same 
$S_j$) is done one surface at a time.
The only change is with the count on a regulus, which is done in 
\cite[Paragraph 19]{Kollar}.

Thus let $Q_j$ be a regulus that contains $n_j$ lines. If $n_j\leq 
2c\sqrt{m}$ then we use the formula on the bottom of p. 38:
$I({\mathcal L}_j)\leq \frac{c}{2}n_j\sqrt{m}$.
If $n_j\geq 2c\sqrt{m}$ then  we use that, by assumption
$$
I({\mathcal L}_j)\leq c^2m =2c\sqrt{m} \tfrac{c}{2}\sqrt{m}\leq 
n_j\tfrac{c}{2}\sqrt{m}.
$$
So
$I({\mathcal L}_j)\leq \tfrac{c}{2}n_j\sqrt{m}$
always holds for every  regulus and
this is the only information about lines on a regulus that the proof in
\cite[Paragraph 24]{Kollar} uses.  The rest of the proof is unchanged. 
\end{proof}

\end{document}